\newtheorem{theorem}{Theorem}[section]
\newtheorem{cor}[theorem]{Corollary}
\newtheorem{lem}[theorem]{Lemma}
\newtheorem{proposition}[theorem]{Proposition}
\newtheorem{rem}{Remark}
\begin{document}
\title[Some properties on the uaw-convergence in Banach lattices]{Some
properties on the unbounded absolute weak convergence in Banach lattices}
\author[A. Elbour]{Aziz Elbour}
\address[A. Elbour]{Department of Mathematics, Faculty of Science and
Technology, Moulay Ismail University of Meknes, P.O. Box 509, Boutalamine
52000, Errachidia, Morocco}
\email{azizelbour@hotmail.com}

\begin{abstract}
In this paper, we investigate more about relationship between $uaw$%
-convergence (resp. $un$-convergence) and the weak convergence. More
precisely, we characterize Banach lattices on which every weak null sequence
is $uaw$-null. Also, we characterize order continuous Banach lattices under
which every norm bounded $un$-null net (resp. sequence) is weakly null. As a
consequence, we study relationship between sequentially $uaw$-compact
operators and weakly compact operators. Also, it is proved that every
continuous operator, from a Banach lattice $E$ into a non-zero Banach space $%
X$, is unbounded continuous if and only if $E^{\prime }$ is order
continuous. Finally, we give a new characterization of $b$-weakly compact
operators using the $uaw$-convergence sequences.
\end{abstract}

\keywords{Banach lattice, $Un$-convergence, $Uaw$-convergence, $Uaw$-compact
operator, $b$-weakly compact operator}
\subjclass[2010]{Primary 46B07; Secondary 46B42, 47B50}
\maketitle

\section{Introduction}

The unbounded convergences received much attention. Several papers about
\textquotedblleft \emph{unbounded order convergence, unbounded norm
convergence}\textquotedblright\ and \emph{\textquotedblleft unbounded
absolute weak convergence}\textquotedblright\ have been published, see \cite%
{Deng, Gao1, Gao2, Kan17,Zabeti18}. Throughout this paper, $E$ will stand
for a Banach lattice. A net $(x_{\alpha })$ in $E$ is said to be:

\begin{itemize}
\item \emph{unbounded norm convergent} ($un$-convergent, for short) if for
each $u\in E^{+}$, $\Vert |x_{\alpha }-x|\wedge u\Vert \rightarrow 0$, in
notation $x_{\alpha }\overset{un}{\longrightarrow }x$.

\item \emph{unbounded absolute weak convergent} ($uaw$-convergent, in brief)
to $x\in E$ if for each $u\in E^{+}$, $|x_{\alpha }-x|\wedge u\rightarrow 0$
weakly, in notation $x_{\alpha }\overset{uaw}{\longrightarrow }x$.
\end{itemize}

Note that both convergences are topological \cite{Deng, Zabeti18}. Clearly, $%
un$-convergence implies the $uaw$-convergence, but the converse is false in
general. However, $un$-convergence agrees with the $uaw$-convergence iff $E$
is order continuous \cite[Theorem 4]{Zabeti18}.

Moreover, several investigation on the relationship between these
convergences and the weak convergence are given in \cite{Deng, Kan17,
Zabeti18}. It is proved in \cite[Theorem 7]{Zabeti18} that $E^{\prime }$ is
order continuous iff every norm bounded $uaw$-null net (resp. sequence) is
weak null. In particular, if $E^{\prime }$ is order continuous then every
norm bounded $un$-null net is weak null \cite[Theorem 2.4]{Deng}. Also, it
is proved that $E$ is discrete and order continuous iff every weak null net
(resp. sequence) is $un$-null \cite[Proposition 4.16]{Kan17} (see also \cite[%
Proposition 6.2]{Deng}).

In this paper, we investigate more about relationship between $uaw$-
convergence (resp. $un$-convergence) and the weak convergence. More
precisely, we characterize Banach lattices on which every weak null sequence
is $uaw$-null (Theorem \ref{sequence}). Also, we characterize order
continuous Banach lattices under which every norm bounded $un$-null net
(resp. sequence) is weakly null (Proposition \ref{un-w}). Furthermore, we
show that for norm bounded sequence, the $uaw$-convergence, $un$-convergence
and the weak convergence coincide on a Banach lattice $E$ iff $E$ is
discrete and both $E$ and $E^{\prime }$ are order continuous (corollaries %
\ref{cor2}, and \ref{cor3}). Next, we discuss when weakly compact operators
are sequentially $uaw$-compact (Proposition \ref{weaklycompact} and Theorem %
\ref{sequentially}). Also, it is proved that every continuous operator, from
a Banach lattice $E$ into a non-zero Banach space $X$, is (sequentially)
unbounded continuous iff $E^{\prime }$ is order continuous (Theorem \ref%
{unbounded-cont}). Finally, we give a new characterization of $b$-weakly
compact operators using the $uaw$-convergence sequences (Theorem \ref%
{b-weaklycompact}).

We refer to \cite{Alip, Alip2, MN} for all unexplained terminology and
standard facts on vector and Banach lattices.

\section{Main Results}

We start this section by the following result which tells us when weak
convergent sequences are $uaw$-convergent.

\begin{theorem}
\label{sequence}Let $E$ be a Banach lattice. The following are equivalent:

\begin{enumerate}
\item The lattice operations of $E$ are sequentially weakly continuous;

\item For any sequence $(x_{n})\subset E$, $x_{n}\overset{w}{\longrightarrow
}0$ implies $x_{n}\overset{uaw}{\longrightarrow }0$.
\end{enumerate}
\end{theorem}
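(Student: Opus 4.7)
The proof splits into two implications, with the bulk of the work lying in $(2)\Rightarrow(1)$. For the easy direction $(1)\Rightarrow(2)$, I would let $x_n\overset{w}{\longrightarrow}0$ and observe that sequential weak continuity of $y\mapsto |y|$ (a consequence of (1)) gives $|x_n|\overset{w}{\longrightarrow}0$; then for each fixed $u\in E^+$, sequential weak continuity of the operation $y\mapsto y\wedge u$ yields $|x_n|\wedge u\overset{w}{\longrightarrow}0\wedge u=0$, which is exactly $x_n\overset{uaw}{\longrightarrow}0$.

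For the converse, I would first reduce to showing that $x_n\overset{w}{\longrightarrow}0$ implies $|x_n|\overset{w}{\longrightarrow}0$; this sequential weak continuity of $|\cdot|$ is standardly equivalent to sequential weak continuity of all lattice operations, via translation and the expression of $\vee,\wedge$ through $|\cdot|$. Given such an $(x_n)$, hypothesis (2) supplies $x_n\overset{uaw}{\longrightarrow}0$, that is, $|x_n|\wedge u\overset{w}{\longrightarrow}0$ for every $u\in E^+$. My plan is then to argue by contradiction: if $(|x_n|)$ were not weakly null, passing to a subsequence would yield $f\in(E')^{+}$ and $\varepsilon>0$ with $f(|x_n|)\geq\varepsilon$, and then the decomposition $|x_n|=|x_n|\wedge u+(|x_n|-u)^+$ combined with $f(|x_n|\wedge u)\to 0$ would force the tail $(|x_n|-u)^+$ to carry $f$-mass at least $\varepsilon/2$ for every choice of $u\in E^+$.

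From here I would attempt an inductive construction: pick a subsequence $(x_{n_k})$ and set $u_k=|x_{n_1}|\vee\cdots\vee|x_{n_k}|$, using $f(|x_{n_{k+1}}|\wedge u_k)\to 0$ to arrange that the increments $v_k=(|x_{n_{k+1}}|-u_k)^+$ are pairwise approximately disjoint while $f(v_k)\geq\varepsilon/2$. A norm-convergent series such as $u=\sum_k 2^{-k}|x_{n_k}|/(\Vert |x_{n_k}|\Vert+1)\in E^+$ should then yield a single positive majorant whose interaction with $(|x_n|)$ contradicts $f(|x_n|\wedge u)\to 0$. The main obstacle I anticipate is executing this disjointification cleanly and matching the choice of $u$ to the tail estimates: arranging that $u$ absorbs enough of each $|x_n|$ along the subsequence to force the contradiction is the delicate lattice-topological step where hypothesis (2) must do its real work.
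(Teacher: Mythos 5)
Your easy direction $(1)\Rightarrow(2)$ is fine and matches the paper. The problem is the endgame of $(2)\Rightarrow(1)$. After the disjointification you propose to reach a contradiction by building a single positive element $u$ (a weighted series of the $|x_{n_k}|$) whose interaction with $(|x_n|)$ is supposed to contradict $f(|x_n|\wedge u)\to 0$. This cannot work: with $u=\sum_k 2^{-k}|x_{n_k}|/(\Vert x_{n_k}\Vert+1)$ you only get $|x_{n_k}|\wedge u\geq 2^{-k}|x_{n_k}|/(\Vert x_{n_k}\Vert+1)$, and the factor $2^{-k}$ makes $f(|x_{n_k}|\wedge u)\to 0$ perfectly consistent with $f(|x_{n_k}|)\geq\varepsilon$. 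More fundamentally, the two facts you are trying to play off against each other --- $x_n\overset{uaw}{\longrightarrow}0$ and $f(|x_n|)\geq\varepsilon$ --- are simply not contradictory: take $E=L^1[0,1]$, $x_n=n\chi_{[0,1/n]}$ and $f=$ integration; then $f(|x_n|\wedge u)\to 0$ for every $u\in E^+$ while $f(|x_n|)=1$. So any correct argument must use the weak nullity (i.e.\ relative weak compactness) of $(x_n)$ in the final step, not merely to extract the subsequence with $f(|x_n|)\geq\varepsilon$.

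The missing ingredient is exactly the weak-compactness input the paper invokes: Theorem 4.37 of Aliprantis--Burkinshaw gives, for the relatively weakly compact set $\{x_n\}$, a \emph{single} $u\in E^+$ with $x'\bigl((|x_n|-u)^+\bigr)<\varepsilon$ uniformly in $n$, after which $x'(|x_n|)\leq\varepsilon+x'(|x_n|\wedge u)\to\varepsilon$ and one is done in three lines --- no contradiction or disjointification needed. If you insist on your contradiction route, the disjoint sequence $(v_k)$ you construct lies in the solid hull of the relatively weakly compact set $\{x_n\}\cup\{0\}$, and the correct way to finish is to cite the fact that such disjoint sequences are weakly null (Aliprantis--Burkinshaw, Theorem 4.34), contradicting $f(v_k)\geq\varepsilon/2$; that is essentially a re-proof of Theorem 4.37. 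Either way, you need one of these two standard weak-compactness theorems; the hand-built majorant $u$ does not replace them.
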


\begin{proof}
$\left( 1\right) \Rightarrow \left( 2\right) $ Obvious because $\left\vert
x_{n}\right\vert \overset{w}{\longrightarrow }0$ implies $x_{n}\overset{uaw}{%
\longrightarrow }0$.

$\left( 2\right) \Rightarrow \left( 1\right) $ Assumes $(2)$ holds. Let $%
(x_{n})\subset E$ be a weakly null sequence. So by hypothesis, $x_{n}\overset%
{uaw}{\longrightarrow }0$. We need to show that $\left\vert x_{n}\right\vert
\overset{w}{\longrightarrow }0$. To this end, let $x^{\prime }\in \left(
E^{\prime }\right) ^{+}$ and $\varepsilon >0$. By Theorem 4.37 of \cite{Alip}%
, there exists $u\in E^{+}$ such that, for all $n$,%
\begin{equation*}
x^{\prime }\left( \left\vert x_{n}\right\vert -\left\vert x_{n}\right\vert
\wedge u\right) =x^{\prime }\left( \left\vert x_{n}\right\vert -u\right)
^{+}<\varepsilon .
\end{equation*}

Since $x^{\prime }\left( \left\vert x_{n}\right\vert \wedge u\right)
\longrightarrow 0$, we conclude that $x^{\prime }\left( \left\vert
x_{n}\right\vert \right) \longrightarrow 0$.

Therefore, $\left\vert x_{n}\right\vert \overset{w}{\longrightarrow }0$, as
desired.
\end{proof}

We know that the lattice operations in every AM-space are sequentially
weakly continuous. Also, if $E$ is a Banach lattice with an order continuous
norm then the lattice operations of $E$ are sequentially weakly continuous
if and only if $E$ is discrete (see Proposition 2.5.23 of \cite{MN} and
Corollary 2.3 of \cite{Chen}).

\begin{cor}
\label{cor1}For a Banach lattice $E$, the following are equivalent:

\begin{enumerate}
\item The lattice operations of $E$ are sequentially weakly continuous and $%
E^{\prime }$ is order continuous;

\item $x_{n}\overset{w}{\longrightarrow }0$ $\Leftrightarrow $ $x_{n}\overset%
{uaw}{\longrightarrow }0$ for every norm bounded sequence $(x_{n})\subset E$.
\end{enumerate}
\end{cor}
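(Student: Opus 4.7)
Both directions should follow by combining Theorem~\ref{sequence} with the cited characterization \cite[Theorem 7]{Zabeti18}, which says that $E'$ is order continuous if and only if every norm bounded $uaw$-null sequence is weakly null. The corollary is essentially the conjunction of these two equivalences.

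For $(1)\Rightarrow(2)$, I would argue as follows. Assume lattice operations of $E$ are sequentially weakly continuous and $E'$ is order continuous. Given a norm bounded sequence $(x_n)\subset E$, if $x_n\xrightarrow{w}0$ then Theorem~\ref{sequence} gives $x_n\xrightarrow{uaw}0$. Conversely, if $x_n\xrightarrow{uaw}0$ then, since $(x_n)$ is norm bounded, the sequence version of \cite[Theorem 7]{Zabeti18} yields $x_n\xrightarrow{w}0$.

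For $(2)\Rightarrow(1)$, I would split the two conditions. To obtain sequential weak continuity of the lattice operations, I would apply Theorem~\ref{sequence} in its sequential form: given any weakly null sequence $(x_n)$ in $E$, note that $(x_n)$ is automatically norm bounded (weakly convergent sequences are norm bounded by the uniform boundedness principle), so hypothesis $(2)$ gives $x_n\xrightarrow{uaw}0$. Thus the implication ``$w$-null $\Rightarrow$ $uaw$-null'' holds for every sequence, which by Theorem~\ref{sequence} means the lattice operations of $E$ are sequentially weakly continuous. To obtain order continuity of $E'$, I would read off the reverse implication from $(2)$: every norm bounded $uaw$-null sequence is weakly null, and by \cite[Theorem 7]{Zabeti18} (sequence version) this forces $E'$ to be order continuous.

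\textbf{Main obstacle.} There is no serious obstacle; the proof is a bookkeeping exercise built on two previous equivalences. The only point that requires a moment of care is the $(2)\Rightarrow(1)$ direction for the lattice-operation condition, where one must observe that although $(2)$ is stated only for norm bounded sequences, weak null sequences are automatically norm bounded, so no information is lost when invoking Theorem~\ref{sequence}, whose characterization is formulated for arbitrary (not necessarily norm bounded) sequences.
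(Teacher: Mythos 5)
Your proposal is correct and matches the paper's approach exactly: the paper proves this corollary by the one-line remark that it ``follows immediately from \cite[Theorem 7]{Zabeti18} and Theorem \ref{sequence}'', which is precisely the combination you carry out, including the (correct) observation that weakly null sequences are norm bounded so no generality is lost in the $(2)\Rightarrow(1)$ direction.
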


\begin{proof}
Follows immediately from \cite[Theorem 7]{Zabeti18} and Theorem \ref%
{sequence}.
\end{proof}

\begin{cor}
For a\ norm bounded sequence $(x_{n})$ in every AM-space, we have $x_{n}%
\overset{w}{\longrightarrow }0\Leftrightarrow x_{n}\overset{uaw}{%
\longrightarrow }0$.
\end{cor}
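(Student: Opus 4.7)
The corollary asks to show that in any AM-space, weak convergence to $0$ and $uaw$-convergence to $0$ coincide on norm bounded sequences. My plan is to derive this directly from Corollary \ref{cor1}, by verifying both of its hypotheses for an arbitrary AM-space $E$.

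First, I would recall (as noted in the paragraph following Theorem \ref{sequence}) that the lattice operations of every AM-space are sequentially weakly continuous. This takes care of hypothesis (1) of Corollary \ref{cor1}. Second, I would observe that the topological dual $E'$ of an AM-space $E$ is an AL-space (by Kakutani's classical duality), and that every AL-space has an order continuous norm. This yields hypothesis (2) of Corollary \ref{cor1}.

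With both hypotheses verified, Corollary \ref{cor1} gives exactly the desired equivalence: for any norm bounded sequence $(x_n)\subset E$, one has $x_n\overset{w}{\longrightarrow}0$ if and only if $x_n\overset{uaw}{\longrightarrow}0$.

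There is no real obstacle here: the whole content is a book-keeping exercise that combines Corollary \ref{cor1} with two standard facts about AM-spaces. If one wished to be self-contained, the only step worth unpacking is the order continuity of $E'=(E)'$ as an AL-space, but this is a textbook result (see \emph{e.g.}\ the references \cite{Alip,MN}) and does not require new argument.
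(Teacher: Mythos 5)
Your proposal is correct and matches the paper's (implicit) intended argument: the paper states this corollary immediately after noting that lattice operations in AM-spaces are sequentially weakly continuous, so it is clearly meant to follow from Corollary \ref{cor1} together with the standard fact that the dual of an AM-space is an AL-space and hence order continuous. Both hypotheses of Corollary \ref{cor1} are verified exactly as you describe, so nothing is missing.
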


It is easy to see from \cite[Theorem 4]{Zabeti18} and its proof that a
Banach lattice $E$ is order continuous iff $x_{n}\overset{uaw}{%
\longrightarrow }0$ $\Leftrightarrow $ $x_{n}\overset{un}{\longrightarrow }0$
for every norm bounded (resp. order bounded) sequence $(x_{n})\subset E$.

\begin{cor}
\label{cor2}For a Banach lattice $E$, the following are equivalent:

\begin{enumerate}
\item $E$ is discrete and both $E$ and $E^{\prime }$ are order continuous;

\item $x_{n}\overset{w}{\longrightarrow }0$ $\Leftrightarrow $ $x_{n}\overset%
{uaw}{\longrightarrow }0$ $\Leftrightarrow $ $x_{n}\overset{un}{%
\longrightarrow }0$ for every norm bounded sequence $(x_{n})\subset E$.
\end{enumerate}
\end{cor}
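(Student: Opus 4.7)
The plan is to derive the statement by assembling three facts already made available above: Corollary \ref{cor1}, the observation just before this corollary that for norm bounded sequences the $uaw$- and $un$-convergences coincide iff $E$ is order continuous (a reformulation of \cite[Theorem 4]{Zabeti18}), and the characterization (Corollary 2.3 of \cite{Chen} / Proposition 2.5.23 of \cite{MN}) that an order continuous Banach lattice has sequentially weakly continuous lattice operations iff it is discrete. No new hard analytic content is needed; the task is to chain these results correctly.

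For $(1)\Rightarrow(2)$, I would first combine the discreteness of $E$ with the order continuity of $E$ to conclude, via the cited result, that the lattice operations of $E$ are sequentially weakly continuous. Paired with the hypothesis that $E^{\prime }$ is order continuous, Corollary \ref{cor1} then delivers the equivalence $x_{n}\overset{w}{\longrightarrow }0 \Leftrightarrow x_{n}\overset{uaw}{\longrightarrow }0$ on norm bounded sequences. Separately, the order continuity of $E$ yields, via the remark preceding the statement, the equivalence $x_{n}\overset{uaw}{\longrightarrow }0 \Leftrightarrow x_{n}\overset{un}{\longrightarrow }0$ on norm bounded sequences. Concatenating these two equivalences gives (2).

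For $(2)\Rightarrow(1)$, I would read off the two halves of the hypothesis. The norm bounded equivalence $x_{n}\overset{uaw}{\longrightarrow }0 \Leftrightarrow x_{n}\overset{un}{\longrightarrow }0$ forces $E$ to be order continuous by the same remark. The norm bounded equivalence $x_{n}\overset{w}{\longrightarrow }0 \Leftrightarrow x_{n}\overset{uaw}{\longrightarrow }0$, invoked through Corollary \ref{cor1}, simultaneously forces the sequential weak continuity of the lattice operations on $E$ and the order continuity of $E^{\prime }$. Having established that $E$ is order continuous and that its lattice operations are sequentially weakly continuous, a final application of Corollary 2.3 of \cite{Chen} (or Proposition 2.5.23 of \cite{MN}) identifies $E$ as discrete, yielding (1).

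The argument is essentially bookkeeping above the earlier results, so I do not anticipate a genuine obstacle. The one place where care is required is to check that the reformulation of \cite[Theorem 4]{Zabeti18} quoted in the remark really does give both directions of the equivalence for norm bounded (not merely order bounded) sequences, so that it can be used as a characterization of order continuity in the $(2)\Rightarrow(1)$ direction. Once this is verified, both implications close cleanly.
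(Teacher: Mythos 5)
Your proposal is correct and assembles exactly the same three ingredients the paper cites for this corollary (Theorem 4 of \cite{Zabeti18} via the remark preceding the statement, Corollary 2.3 of \cite{Chen}, and Corollary \ref{cor1}); the paper simply states that the result ``follows immediately'' from these, and your write-up is the intended chain of implications spelled out.
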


\begin{proof}
Follows immediately from \cite[Theorem 4]{Zabeti18}, \cite[Corollary 2.3]%
{Chen}, and Corollary \ref{cor1}.
\end{proof}

\begin{proposition}
\label{un-w}For an order continuous Banach lattice $E$, the following are
equivalent:

\begin{enumerate}
\item $E^{\prime }$ is order continuous;

\item $x_{\alpha }\overset{un}{\longrightarrow }0\Rightarrow x_{\alpha }%
\overset{w}{\longrightarrow }0$ for every norm bounded net $(x_{\alpha
})\subset E$.

\item $x_{n}\overset{un}{\longrightarrow }0\Rightarrow x_{n}\overset{w}{%
\longrightarrow }0$ for every norm bounded sequence $(x_{n})\subset E$.
\end{enumerate}
\end{proposition}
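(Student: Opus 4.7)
The plan is to reduce all three implications to facts that are either stated in the excerpt or standard in the literature, using the hypothesis that $E$ is order continuous as the bridge that allows us to swap $un$ and $uaw$.

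First I would prove $(1)\Rightarrow(2)$. Let $(x_{\alpha})$ be a norm bounded $un$-null net in $E$. Since $un$-convergence always implies $uaw$-convergence, $(x_{\alpha})$ is also $uaw$-null. Because $E'$ is order continuous, Theorem 7 of \cite{Zabeti18} (every norm bounded $uaw$-null net is weak null) yields $x_{\alpha}\overset{w}{\longrightarrow}0$. (Alternatively, this is Theorem 2.4 of \cite{Deng}.) The implication $(2)\Rightarrow(3)$ is immediate since sequences are nets.

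The substantive step is $(3)\Rightarrow(1)$. Here I would exploit the order continuity of $E$: by Theorem 4 of \cite{Zabeti18} (cf.\ the remark just before the proposition in the excerpt), on a norm bounded sequence in an order continuous Banach lattice, $uaw$-convergence coincides with $un$-convergence. So given any norm bounded $uaw$-null sequence $(x_{n})\subset E$, it is automatically $un$-null, and (3) then produces $x_{n}\overset{w}{\longrightarrow}0$. Thus every norm bounded $uaw$-null sequence in $E$ is weak null. Applying the sequence version of Theorem 7 of \cite{Zabeti18} in the reverse direction, we conclude that $E'$ is order continuous.

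There is no real obstacle in this argument; the only care needed is to invoke the equivalence of $uaw$- and $un$-convergence for norm bounded sequences on an order continuous Banach lattice precisely in the direction $(3)\Rightarrow(1)$, since without order continuity of $E$ a $uaw$-null sequence need not be $un$-null, and the hypothesis (3) could not be leveraged. Otherwise the proof is a short chain of citations, which is why it is stated as a proposition rather than a theorem.
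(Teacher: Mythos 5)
Your proof is correct, and the implications $(1)\Rightarrow(2)\Rightarrow(3)$ match the paper (the paper cites \cite[Theorem 6.4]{Deng} for the first step, which is the same fact you invoke via $un\Rightarrow uaw$ plus \cite[Theorem 7]{Zabeti18}). Where you genuinely diverge is in $(3)\Rightarrow(1)$. The paper argues through disjoint sequences: it takes a norm bounded disjoint sequence $(x_n)\subset E^+$, uses \cite[Proposition 3.5]{Kan17} (order continuity of $E$ forces such a sequence to be $un$-null), applies (3) to get $x_n\overset{w}{\longrightarrow}0$, and then closes with the classical characterization of order continuity of $E'$ via weakly null disjoint sequences (Zaanen, Theorem 116.1, or \cite[Theorem 2.4.14]{MN}). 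You instead upgrade an arbitrary norm bounded $uaw$-null sequence to a $un$-null one using the sequential form of \cite[Theorem 4]{Zabeti18} (which the paper records explicitly just before Corollary \ref{cor2}), apply (3), and then invoke the reverse direction of the sequential version of \cite[Theorem 7]{Zabeti18}. Both are short citation chains resting on the same underlying disjoint-sequence phenomenon --- Theorem 7 of \cite{Zabeti18} is itself proved that way --- so neither is more elementary; yours stays entirely inside the modern unbounded-convergence literature, while the paper's reaches back to the classical result and so depends on one fewer ``black box'' about $uaw$-convergence. The one point to be careful about in your version is that you need the \emph{sequence} form of both Theorem 4 and Theorem 7 of \cite{Zabeti18}; both are available (the paper states them), so there is no gap.
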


\begin{proof}
$\left( 1\right) \Rightarrow \left( 2\right) $ Follows from \cite[Theorem 6.4%
]{Deng}.

$\left( 2\right) \Rightarrow \left( 3\right) $ Obvious.

$\left( 3\right) \Rightarrow \left( 1\right) $ Let $(x_{n})\subset E^{+}$ be
any norm bounded disjoint sequence and let $u\in E^{+}$. Since $E$ is order
continuous, it follows from \cite[Proposition 3.5]{Kan17} that $x_{n}\overset%
{un}{\longrightarrow }0$. So, by hypothesis $x_{n}\overset{w}{%
\longrightarrow }0$. Now Theorem 116.1 of \cite{Z2} (see also \cite[Theorem
2.4.14]{MN}) finish the proof.
\end{proof}

\begin{cor}
\label{cor3}For a Banach lattice $E$, the following assertions are
equivalent:

\begin{enumerate}
\item $E$ is discrete and both $E$ and $E^{\prime }$ are order continuous;

\item $x_{\alpha }\overset{un}{\longrightarrow }0\Leftrightarrow x_{\alpha }%
\overset{uaw}{\longrightarrow }0\Leftrightarrow x_{\alpha }\overset{w}{%
\longrightarrow }0$ for every norm bounded net $(x_{\alpha })\subset E$.

\item $x_{n}\overset{w}{\longrightarrow }0$ $\Leftrightarrow $ $x_{n}\overset%
{uaw}{\longrightarrow }0$ $\Leftrightarrow $ $x_{n}\overset{un}{%
\longrightarrow }0$ for every norm bounded sequence $(x_{n})\subset E$.

\item $x_{\alpha }\overset{un}{\longrightarrow }0\Leftrightarrow x_{\alpha }%
\overset{w}{\longrightarrow }0$ for every norm bounded net $(x_{\alpha
})\subset E$.

\item $x_{n}\overset{un}{\longrightarrow }0\Leftrightarrow x_{n}\overset{w}{%
\longrightarrow }0$ for every norm bounded sequence $(x_{n})\subset E$.
\end{enumerate}
\end{cor}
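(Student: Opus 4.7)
The plan is to establish all five assertions via the chain $(1)\Rightarrow(2)\Rightarrow(3)\Rightarrow(1)$ combined with $(2)\Rightarrow(4)\Rightarrow(5)\Rightarrow(1)$. The equivalence $(1)\Leftrightarrow(3)$ is already contained in Corollary \ref{cor2}, and the implications $(2)\Rightarrow(3)$, $(2)\Rightarrow(4)$, and $(4)\Rightarrow(5)$ are trivial: (3) and (5) are just the sequence specialisations of (2) and (4), while (4) follows from (2) by discarding the middle relation $uaw$. Consequently, only two implications require genuine argument: $(1)\Rightarrow(2)$ and $(5)\Rightarrow(1)$.

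For $(1)\Rightarrow(2)$, I would assemble the three-way equivalence at the net level from ingredients already on the table. First, since $E$ is discrete and order continuous, \cite[Proposition 4.16]{Kan17} shows that every weakly null net is $un$-null, which gives $w\Rightarrow un$ a fortiori on norm bounded nets. The implication $un\Rightarrow uaw$ is immediate from the definitions. Finally, $uaw\Rightarrow w$ on norm bounded nets is exactly \cite[Theorem 7]{Zabeti18}, applicable because $E'$ is order continuous by hypothesis.

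The main step is $(5)\Rightarrow(1)$. Here the key observation is that every weakly convergent sequence in a Banach space is norm bounded, so the direction $w\Rightarrow un$ in (5), although nominally restricted to norm bounded sequences, in fact applies to every weakly null sequence in $E$. Then \cite[Proposition 4.16]{Kan17} (in its sequence form; see also \cite[Proposition 6.2]{Deng}) yields at once that $E$ is discrete and order continuous. Knowing $E$ is order continuous, the remaining direction $un\Rightarrow w$ of (5) is exactly condition (3) of Proposition \ref{un-w}, which forces $E'$ to be order continuous as well. Thus $E$ is discrete with both $E$ and $E'$ order continuous, proving (1).

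The main obstacle is the implication $(5)\Rightarrow(1)$, which at first sight looks weaker than what is needed, since it concerns only norm bounded sequences. What makes it work is the automatic boundedness of weakly convergent sequences via the uniform boundedness principle, which erases that restriction on the $w\Rightarrow un$ side and lets the structural characterisations of \cite{Kan17} and Proposition \ref{un-w} be invoked directly, without any extra truncation or localisation argument.
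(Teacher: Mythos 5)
Your proof is correct and follows essentially the same route as the paper: the paper also reduces everything to $(1)\Rightarrow(2)$ (via Theorems 4 and 7 of \cite{Zabeti18} and \cite[Proposition 4.16]{Kan17}), the trivial implications $(2)\Rightarrow(3)\Rightarrow(5)$ and $(2)\Rightarrow(4)\Rightarrow(5)$, and $(5)\Rightarrow(1)$ via \cite[Proposition 4.16]{Kan17} together with Proposition \ref{un-w}, using exactly your observation that weakly null sequences are norm bounded. Your minor variations (deriving $un\Rightarrow uaw$ from the definitions rather than from \cite[Theorem 4]{Zabeti18}, and closing the loop at (3) through Corollary \ref{cor2} instead of $(3)\Rightarrow(5)$) are harmless.
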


\begin{proof}
$\left( 1\right) \Rightarrow \left( 2\right) $ Follows from Theorems 4 and 7
of \cite{Zabeti18} and \cite[Proposition 4.16]{Kan17}.

$\left( 2\right) \Rightarrow \left( 4\right) \Rightarrow \left( 5\right) $
and $\left( 2\right) \Rightarrow \left( 3\right) \Rightarrow \left( 5\right)
$ are obvious.

$\left( 5\right) \Rightarrow \left( 1\right) $ Follows from \cite[%
Proposition 4.16]{Kan17}\ and Proposition \ref{un-w} by noting that every
weak null sequence is norm bounded.
\end{proof}

\begin{rem}
It follows from \cite[Theorem 4]{Zabeti18} (resp. Corollary \ref{cor1}) that
the condition (3) of Corollary \ref{cor3} cannot be replaced by the
following condition%
\begin{equation*}
x_{n}\overset{uaw}{\longrightarrow }0\Leftrightarrow x_{n}\overset{un}{%
\longrightarrow }0,
\end{equation*}

(resp.
\begin{equation*}
x_{n}\overset{w}{\longrightarrow }0\Leftrightarrow x_{n}\overset{uaw}{%
\longrightarrow }0\text{)}.
\end{equation*}
\end{rem}

\begin{proposition}
\label{almostorder}Let $(x_{\alpha })$ be a net in a Banach lattice $E$. If $%
(x_{\alpha })$ is almost order bounded or relatively weakly compact, then $%
x_{\alpha }\overset{uaw}{\longrightarrow }0\Leftrightarrow \left\vert
x_{\alpha }\right\vert \overset{w}{\longrightarrow }0$.
\end{proposition}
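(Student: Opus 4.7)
The plan is to handle the two implications separately. The reverse direction $|x_{\alpha }|\overset{w}{\longrightarrow }0 \Rightarrow x_{\alpha }\overset{uaw}{\longrightarrow }0$ is immediate from the sandwich $0\leq |x_{\alpha }|\wedge u\leq |x_{\alpha }|$: for every $x^{\prime }\in (E^{\prime })^{+}$ we have $0\leq x^{\prime }(|x_{\alpha }|\wedge u)\leq x^{\prime }(|x_{\alpha }|)\rightarrow 0$, hence $|x_{\alpha }|\wedge u\overset{w}{\longrightarrow }0$ for each $u\in E^{+}$, after decomposing an arbitrary $x^{\prime }\in E^{\prime }$ as $x^{\prime +}-x^{\prime -}$.

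For the forward direction, I would fix $x^{\prime }\in (E^{\prime })^{+}$ and $\varepsilon >0$ and exploit the pointwise splitting
\begin{equation*}
x^{\prime }(|x_{\alpha }|)=x^{\prime }(|x_{\alpha }|\wedge u)+x^{\prime }((|x_{\alpha }|-u)^{+}),
\end{equation*}
valid for every $u\in E^{+}$. The hypothesis $x_{\alpha }\overset{uaw}{\longrightarrow }0$ drives the first summand to $0$, so the proof reduces to choosing $u$ so that the uniform tail $\sup_{\alpha }x^{\prime }((|x_{\alpha }|-u)^{+})$ is at most $\varepsilon $.

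In the almost order bounded case this is built into the definition: one picks $u\in E^{+}$ with $\Vert (|x_{\alpha }|-u)^{+}\Vert \leq \varepsilon $ for every $\alpha $, so $x^{\prime }((|x_{\alpha }|-u)^{+})\leq \Vert x^{\prime }\Vert \varepsilon $. In the relatively weakly compact case, I would invoke the Dodds--Fremlin / Meyer-Nieberg characterization of relatively weakly compact sets in a Banach lattice, which strengthens Theorem 4.37 of \cite{Alip} from a single weakly null sequence to an arbitrary relatively weakly compact family: for every $x^{\prime }\in (E^{\prime })^{+}$ and $\varepsilon >0$ there exists $u\in E^{+}$ (allowed to depend on $x^{\prime }$) with $\sup_{\alpha }x^{\prime }((|x_{\alpha }|-u)^{+})<\varepsilon $. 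Combining either estimate with the splitting and letting $\alpha $ run gives $\limsup_{\alpha }x^{\prime }(|x_{\alpha }|)\leq \Vert x^{\prime }\Vert \varepsilon $ (respectively, $\leq \varepsilon $); sending $\varepsilon \downarrow 0$ and then decomposing a general $x^{\prime }=x^{\prime +}-x^{\prime -}$ yields $|x_{\alpha }|\overset{w}{\longrightarrow }0$.

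I expect the main obstacle to be the relatively weakly compact case, since Theorem 4.37 of \cite{Alip} is typically stated for weakly null sequences rather than for an arbitrary relatively weakly compact family; the set/net version must therefore be either quoted from the literature or derived by a subnet reduction. Once that uniform-tail estimate is secured, the argument is the same additive decomposition already used in the proof of Theorem \ref{sequence}.
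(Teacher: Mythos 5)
Your proposal is correct and follows essentially the same route as the paper: the same splitting $x^{\prime }(|x_{\alpha }|)=x^{\prime }(|x_{\alpha }|\wedge u)+x^{\prime }((|x_{\alpha }|-u)^{+})$, the definition of almost order boundedness in the first case, and Theorem 4.37 of \cite{Alip} in the second. The obstacle you flag is not one: that theorem is already stated for an arbitrary relatively weakly compact subset of a Banach lattice (not merely for weakly null sequences), giving the uniform tail estimate $\sup_{\alpha }x^{\prime }((|x_{\alpha }|-u)^{+})<\varepsilon $ directly, exactly as the paper uses it.
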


\begin{proof}
Clearly $\left\vert x_{\alpha }\right\vert \overset{w}{\longrightarrow }0$
implies $x_{\alpha }\overset{uaw}{\longrightarrow }0$. Now assumes that $%
x_{\alpha }\overset{uaw}{\longrightarrow }0$ and pick any $x^{\prime }\in
\left( E^{\prime }\right) ^{+}$ and $\varepsilon >0$.

\begin{itemize}
\item If $(x_{\alpha })$ is almost order bounded then there exists some $%
u\in E^{+}$ such that, for all $\alpha $,%
\begin{equation*}
\left\Vert \left( \left\vert x_{\alpha }\right\vert -u\right)
^{+}\right\Vert <\varepsilon .
\end{equation*}
\end{itemize}

So,%
\begin{eqnarray*}
x^{\prime }\left( \left\vert x_{\alpha }\right\vert \right) &=&x^{\prime
}\left( \left\vert x_{\alpha }\right\vert -u\right) ^{+}+x^{\prime }\left(
\left\vert x_{\alpha }\right\vert \wedge u\right) \\
&\leq &\varepsilon \left\Vert x^{\prime }\right\Vert +x^{\prime }\left(
\left\vert x_{\alpha }\right\vert \wedge u\right) .
\end{eqnarray*}%
Since $x^{\prime }\left( \left\vert x_{\alpha }\right\vert \wedge u\right)
\longrightarrow 0$, we conclude that $x^{\prime }\left( \left\vert x_{\alpha
}\right\vert \right) \longrightarrow 0$.

\begin{itemize}
\item If $(x_{\alpha })$ is relatively weakly compact then, by Theorem 4.37
of \cite{Alip}, there exists $u\in E^{+}$ such that, for all $\alpha $,%
\begin{equation*}
x^{\prime }\left( \left\vert x_{\alpha }\right\vert -\left\vert x_{\alpha
}\right\vert \wedge u\right) =x^{\prime }\left( \left\vert x_{\alpha
}\right\vert -u\right) ^{+}<\varepsilon .
\end{equation*}
\end{itemize}

Since $x^{\prime }\left( \left\vert x_{\alpha }\right\vert \wedge u\right)
\longrightarrow 0$, we conclude that $x^{\prime }\left( \left\vert x_{\alpha
}\right\vert \right) \longrightarrow 0$.

Therefore, $\left\vert x_{\alpha }\right\vert \overset{w}{\longrightarrow }0$%
, as desired.
\end{proof}

\begin{cor}
Let $E$ be a Banach lattice. If $(x_{\alpha })\subset E$ is a disjoint net
and almost order bounded in $E^{\prime \prime }$, then $\left\vert x_{\alpha
}\right\vert \overset{w}{\longrightarrow }0$.
\end{cor}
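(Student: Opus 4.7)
The strategy is to adapt the ``almost order bounded'' case in the proof of Proposition~\ref{almostorder}, carrying out the estimates inside $E''$ and using the disjointness of $(x_\alpha)$ in place of the $uaw$-null hypothesis that is no longer granted to us.

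First I would fix $x'\in (E')^+$, extended canonically to a positive functional on $E''$, and a tolerance $\varepsilon>0$. Almost order boundedness in $E''$ supplies $u''\in (E'')^+$ with $\|(|x_\alpha|-u'')^+\|_{E''}<\varepsilon$ for every $\alpha$. The lattice decomposition $|x_\alpha|=|x_\alpha|\wedge u''+(|x_\alpha|-u'')^+$ in $E''$ would then yield
\[
x'(|x_\alpha|)\;\le\;x'\bigl(|x_\alpha|\wedge u''\bigr)+\varepsilon\|x'\|,
\]
reducing the problem to showing $x'\bigl(|x_\alpha|\wedge u''\bigr)\to 0$.

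The decisive observation is that disjointness is preserved by capping: $(|x_\alpha|\wedge u'')$ is still pairwise disjoint in $E''$, and each term lies below $u''$. Consequently, for every finite subset $F$ of the index set,
\[
\sum_{\alpha\in F}x'\bigl(|x_\alpha|\wedge u''\bigr)
\;=\;x'\Bigl(\bigvee_{\alpha\in F}|x_\alpha|\wedge u''\Bigr)
\;\le\;x'(u'')<\infty.
\]
This forces $\{\alpha: x'(|x_\alpha|\wedge u'')\ge\delta\}$ to be finite for every $\delta>0$; a directedness argument then upgrades this to genuine net convergence $x'(|x_\alpha|\wedge u'')\to 0$.

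Combining the two estimates gives $\limsup_\alpha x'(|x_\alpha|)\le\varepsilon\|x'\|$; sending $\varepsilon\to 0$ yields $x'(|x_\alpha|)\to 0$ first for $x'\in (E')^+$ and then, via $x'=x'_+-x'_-$, for every $x'\in E'$. Hence $|x_\alpha|\overset{w}{\longrightarrow}0$. The hard part will be the last step of the preceding paragraph: converting a uniform finite-partial-sum bound into genuine net convergence. For a sequence this would be automatic from summability; for a general net one must carefully use directedness of the index set to locate upper bounds of the finite exceptional sets that themselves lie outside these sets. Almost order boundedness is used only to control the ``tail'' $(|x_\alpha|-u'')^+$, while disjointness delivers the crucial bound on the ``body'' $|x_\alpha|\wedge u''$, so the two hypotheses play nicely complementary roles.
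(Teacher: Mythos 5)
Your overall strategy is essentially the paper's argument with its two ingredients inlined: the paper simply applies Lemma 2 of \cite{Zabeti18} (every disjoint net is $uaw$-null) to $(x_{\alpha })$ viewed as a net in $E^{\prime \prime }$, and then invokes Proposition \ref{almostorder} in $E^{\prime \prime }$ to get $\left\vert x_{\alpha }\right\vert \rightarrow 0$ for $\sigma (E^{\prime \prime },E^{\prime \prime \prime })$, hence for $\sigma (E,E^{\prime })$. Your decomposition of $\left\vert x_{\alpha }\right\vert $ via almost order boundedness is the same as in the almost-order-bounded case of Proposition \ref{almostorder}, and your disjointness estimate is correct: for pairwise disjoint positive elements the sum equals the supremum, so $\sum_{\alpha \in F}x^{\prime }(\left\vert x_{\alpha }\right\vert \wedge u^{\prime \prime })\leq x^{\prime }(u^{\prime \prime })$ for every finite $F$, whence the set $A_{\delta }=\{\alpha :x^{\prime }(\left\vert x_{\alpha }\right\vert \wedge u^{\prime \prime })\geq \delta \}$ is finite for each $\delta >0$.

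The step you yourself flag as ``the hard part'' is, however, a genuine gap: for a net indexed by an arbitrary directed set, the complement of a finite set need not contain a tail, so finiteness of $A_{\delta }$ does not by itself yield $x^{\prime }(\left\vert x_{\alpha }\right\vert \wedge u^{\prime \prime })\rightarrow 0$. Concretely, if the index set has a maximal element $\alpha ^{\ast }$ and $\alpha ^{\ast }\in A_{\delta }$, then every tail contains $\alpha ^{\ast }$ and no upper bound of $A_{\delta }$ lying outside $A_{\delta }$ can exist, so the directedness argument you sketch cannot be carried out in general. (For sequences the passage from cofinite to residual is automatic, which is why the sequential analogue of your argument is unproblematic; for nets this delicate point is exactly what is packaged inside Lemma 2 of \cite{Zabeti18}.) The clean repair is therefore not to reprove the disjointness half from scratch: cite Lemma 2 of \cite{Zabeti18} to conclude $x_{\alpha }\overset{uaw}{\longrightarrow }0$ as a net in $E^{\prime \prime }$, and then apply Proposition \ref{almostorder} in $E^{\prime \prime }$, which is precisely the paper's proof.
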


\begin{proof}
Since $\left( x_{\alpha }\right) $ is a disjoint net in $E^{\prime \prime }$%
, it follows from Lemma 2 of \cite{Zabeti18} that $x_{\alpha }\overset{uaw}{%
\longrightarrow }0$ (as a net in $E^{\prime \prime }$). So, by proposition %
\ref{almostorder}, $\left\vert x_{\alpha }\right\vert \longrightarrow 0$ for
$\sigma \left( E^{\prime \prime },E^{\prime \prime \prime }\right) $, and
hence $\left\vert x_{\alpha }\right\vert \longrightarrow 0$ for $\sigma
\left( E,E^{\prime }\right) $.
\end{proof}

The next result is a $uaw$ variant of \cite[Lemma 9.10]{Kan17}; its proof
follows from Proposition \ref{almostorder}.

\begin{lem}
\label{Lemma-uaw}Let $(x_{n})$ be a sequence in a Banach lattice $E$. If $%
x_{n}\overset{w}{\longrightarrow }x$ and $x_{n}\overset{uaw}{\longrightarrow
}y$ then $x=y$.
\end{lem}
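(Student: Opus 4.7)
The plan is to reduce everything to Proposition \ref{almostorder} by shifting. Set $z_n := x_n - y$. Then by assumption $z_n \overset{uaw}{\longrightarrow} 0$, and simultaneously $z_n \overset{w}{\longrightarrow} x - y$. To apply Proposition \ref{almostorder}, I need to observe that $(z_n)$ is relatively weakly compact: this is immediate because a weakly convergent sequence is relatively weakly compact (its weak closure contains only its weak limit together with its terms, which is weakly compact), and translating by $-y$ preserves this.

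With $(z_n)$ relatively weakly compact and $z_n \overset{uaw}{\longrightarrow} 0$, Proposition \ref{almostorder} yields $|z_n| \overset{w}{\longrightarrow} 0$. The next step is to upgrade this to $z_n \overset{w}{\longrightarrow} 0$: for any $x' \in (E')^{+}$, the inequality $|x'(z_n)| \leq x'(|z_n|) \to 0$ gives $x'(z_n) \to 0$, and since $E' = (E')^{+} - (E')^{+}$, this extends to all $x' \in E'$. Thus $z_n \overset{w}{\longrightarrow} 0$, i.e. $x_n \overset{w}{\longrightarrow} y$. Combined with the hypothesis $x_n \overset{w}{\longrightarrow} x$, uniqueness of weak limits forces $x = y$.

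There is no real obstacle here; the only point that requires a moment of care is justifying that $(z_n)$ is relatively weakly compact so that Proposition \ref{almostorder} applies, and the passage from weak convergence of $|z_n|$ to weak convergence of $z_n$ via positive functionals. Both are standard and the rest is a one-line application of the cited proposition.
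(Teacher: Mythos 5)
Your proof is correct and follows essentially the same route as the paper: the same shift $z_n = x_n - y$, the same appeal to Proposition \ref{almostorder} via relative weak compactness of the weakly convergent sequence, and the same conclusion $|z_n|\overset{w}{\longrightarrow}0 \Rightarrow z_n\overset{w}{\longrightarrow}0 \Rightarrow x=y$. You merely spell out two small steps the paper leaves implicit.
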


\begin{proof}
Put $z_{n}=x_{n}-y$ and $z=x-y$. Then $z_{n}\overset{w}{\longrightarrow }z$
and $z_{n}\overset{uaw}{\longrightarrow }0$. Since $\left( z_{n}\right) $ is
relatively weakly compact, then by Proposition \ref{almostorder}, we have $%
\left\vert z_{n}\right\vert \overset{w}{\longrightarrow }0$ and hence $z_{n}%
\overset{w}{\longrightarrow }0$. So $z=0$, a desired.
\end{proof}

In \cite{Kan17} (resp. \cite{Zabeti19}), the concept of (sequentially) $un$%
-compact (resp. $uaw$-compact) operator is defined and studied. An operator $%
T\colon X\rightarrow E$, where $X$ is a Banach space and $E$ is a Banach
lattice, is called $un$\emph{-compact} (resp. $uaw$\emph{-compact}) if $%
T(B_{X})$ is relatively $un$-compact (resp. $uaw$-compact), where $B_{X}$
denotes the closed unit ball of $X$. Equivalently, for every norm bounded
net $(x_{\alpha })$ its image has a subnet, which is $un$-convergent (resp. $%
uaw$-convergent).

And $T$ is called \emph{sequentially }$un$\emph{-compact} (resp. \emph{%
sequentially }$uaw$\emph{-compact}) if $T(B_{X})$ is relatively sequentially
$un$-compact (resp. sequentially $uaw$-compact). Equivalently, for every
norm bounded sequence $(x_{n})$ its image has a subsequence, which is $un$%
-convergent (resp. $uaw$-convergent).

In \cite{Kan17}, the authors discussed when weakly compact operators are
sequentially $un$-compact. In the next, we discuss when weakly compact
operators are sequentially $uaw$-compact.

Theorem 7 of \cite{Zabeti18}, Theorem \ref{sequence} and Corollary \ref{cor3}
yield the following.

\begin{proposition}
\label{weaklycompact}Let $T\colon X\rightarrow E$ be an operator from a
Banach space $X$ into a Banach lattice $E$.

\begin{enumerate}
\item If $E^{\prime }$ is order continuous and $T$ is (sequentially) $uaw$%
-compact, then $T$ is weakly compact;

\item If the lattice operations of $E$ are sequentially weakly continuous,
and $T$ is weakly compact, then $T$ is sequentially $uaw$-compact;

\item If the lattice operations of $E$ are sequentially weakly continuous
and $E^{\prime }$ is order continuous, then $T$ is weakly compact iff it is
sequentially $uaw$-compact;

\item If $E$ is discrete and both $E$ and $E^{\prime }$ are order
continuous, then $T\colon X\rightarrow E$ is weakly compact iff it is
(sequentially) $uaw$-compact iff it (sequentially) $un$-compact.
\end{enumerate}
\end{proposition}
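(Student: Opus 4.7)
The overall plan is that all four statements reduce to threading convergent subnets (or subsequences) from one flavor of unbounded convergence to another, using exactly the three cited ingredients: Theorem 7 of \cite{Zabeti18} transfers \emph{norm bounded} $uaw$-null to weakly null under order continuity of $E'$; Theorem \ref{sequence} transfers weakly null \emph{sequences} to $uaw$-null when the lattice operations are sequentially weakly continuous; and Corollary \ref{cor3} collapses $un$-convergence with $uaw$-convergence (and with weak convergence) on bounded nets/sequences when $E$ is discrete with $E, E'$ order continuous. Note that in all cases $T(B_X)$ is already norm bounded, so one never falls outside the scope of these results.

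For \textbf{(1)} I would argue that $T(B_X)$ is relatively weakly compact by checking the net version directly: take any bounded net $(x_\alpha)\subset B_X$; $uaw$-compactness of $T$ produces a subnet $(Tx_{\alpha_\beta})$ which is $uaw$-convergent to some $y\in E$; this subnet is norm bounded, so Theorem 7 of \cite{Zabeti18} upgrades $uaw$-convergence to weak convergence; hence $T(B_X)$ is relatively weakly compact. If only sequential $uaw$-compactness is assumed, one runs the same argument on sequences and invokes the Eberlein--Smulian theorem to conclude. For \textbf{(2)} I would take a bounded sequence $(x_n)\subset B_X$, extract a weakly convergent subsequence $Tx_{n_k}\xrightarrow{w} y$ from weak compactness of $T$ (again via Eberlein--Smulian), and then apply Theorem \ref{sequence} to $(Tx_{n_k}-y)$ to obtain $Tx_{n_k}\xrightarrow{uaw} y$. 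This shows $T$ is sequentially $uaw$-compact.

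Part \textbf{(3)} is then immediate: the sequential-$uaw$-compact $\Rightarrow$ weakly compact direction comes from (1), while the converse comes from (2), the hypotheses of (3) being exactly the union of the hypotheses of (1) and (2). For \textbf{(4)} I would first observe that discreteness plus order continuity of $E$ implies, via Corollary~2.3 of \cite{Chen} (already recalled after Theorem \ref{sequence}), that the lattice operations of $E$ are sequentially weakly continuous, so (3) applies and yields the equivalence weakly compact $\Leftrightarrow$ sequentially $uaw$-compact. To bring $un$-compactness into the picture, I would invoke Corollary \ref{cor3} (or directly Theorem~4 of \cite{Zabeti18}), which says that on norm bounded nets (resp.\ sequences) $uaw$-convergence coincides with $un$-convergence; this immediately identifies (sequential) $uaw$-compactness of $T$ with (sequential) $un$-compactness of $T$, closing the chain of equivalences.

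There is no substantial obstacle: the argument is essentially bookkeeping. The only point that warrants a moment of care is the net-vs-sequence distinction in (1)---making sure that when only sequential $uaw$-compactness is hypothesized the conclusion of weak compactness still follows, which is handled by Eberlein--Smulian applied to the norm bounded set $T(B_X)$.
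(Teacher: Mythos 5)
Your proof is correct and follows essentially the same route as the paper, which simply derives the proposition from Theorem 7 of \cite{Zabeti18}, Theorem \ref{sequence} and Corollary \ref{cor3}; your write-up just makes explicit the Eberlein--\v{S}mulian bookkeeping and the use of \cite[Corollary 2.3]{Chen} in part (4) that the paper leaves implicit.
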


\begin{rem}
Note that a weakly compact operator is not necessary sequentially $uaw$%
-compact. In fact, the identity operator $I:L^{2}\left[ 0,1\right]
\rightarrow L^{2}\left[ 0,1\right] $ is weakly compact but it is not
sequentially $uaw$-compact. To see this, let $\left( r_{n}\right) $ be the
sequence of Rademacher function in $L^{2}\left[ 0,1\right] $. This sequence
is order bounded satisfying $r_{n}\overset{w}{\longrightarrow }0$, $%
\left\vert r_{n}\right\vert =\mathbf{1,}$ and $\left\Vert I\left(
r_{n}\right) \right\Vert =\left\Vert r_{n}\right\Vert =1$ (see \cite[p. 196]%
{MN}). So $\left( r_{n}\right) $ has no subsequence $uaw$-convergent.

Also, a (sequentially) $uaw$-compact operator is not necessary weakly
compact. In fact, by Proposition 9.1 of \cite{Kan17}, the identity operator $%
I:\ell ^{1}\rightarrow \ell ^{1}$ is (sequentially) $un$-compact (and hence,
it is (sequentially) $uaw$-compact) but is not weakly compact.
\end{rem}

\begin{cor}
An operator $T\colon X\rightarrow E$ from a Banach space $X$ into an
AM-space $E$ is weakly compact iff it is sequentially $uaw$-compact.
\end{cor}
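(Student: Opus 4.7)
The plan is to view this corollary as a direct specialization of Proposition \ref{weaklycompact}(3), so all that really needs verification is that the two hypotheses of that proposition are automatic for AM-spaces.

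First I would invoke the fact, recalled just after Theorem \ref{sequence}, that the lattice operations in every AM-space are sequentially weakly continuous; this takes care of the first hypothesis. Next I would verify that $E'$ is order continuous whenever $E$ is an AM-space: indeed, the topological dual of an AM-space is an AL-space (this is the classical Kakutani duality), and every AL-space has an order continuous norm since its norm is additive on the positive cone. Thus the second hypothesis of Proposition \ref{weaklycompact}(3) is also automatically satisfied.

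With both hypotheses in hand, Proposition \ref{weaklycompact}(3) applies verbatim and delivers the equivalence ``$T$ is weakly compact $\iff$ $T$ is sequentially $uaw$-compact'' for any operator $T\colon X\rightarrow E$ into an AM-space $E$.

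There is no serious obstacle here: the real work has already been done in Proposition \ref{weaklycompact}, and the only thing to check is the (standard) pair of structural facts about AM-spaces. The only point worth flagging is to make sure one cites the right direction of Kakutani's theorem (the dual of an AM-space is an AL-space, not the converse), since the opposite identification would not give order continuity of $E'$.
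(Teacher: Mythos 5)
Your proposal is correct and follows exactly the route the paper intends: the corollary is stated as an immediate consequence of Proposition \ref{weaklycompact}(3), using the fact (recalled after Theorem \ref{sequence}) that lattice operations in an AM-space are sequentially weakly continuous, together with the standard Kakutani duality showing that the dual of an AM-space is an AL-space and hence order continuous. Both structural facts are verified correctly, so nothing is missing.
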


\begin{theorem}
\label{sequentially}For a Banach lattice $E$, the following assertions are
equivalent:

\begin{enumerate}
\item The lattice operations of $E$ are sequentially weakly continuous;

\item For every Banach space $X$, every weakly compact operator $%
T:X\rightarrow E$ is sequentially $uaw$-compact;

\item Every weakly compact operator $T:\ell ^{1}\rightarrow E$ is
sequentially $uaw$-compact
\end{enumerate}
\end{theorem}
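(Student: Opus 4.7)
My plan is to establish the cycle $(1)\Rightarrow(2)\Rightarrow(3)\Rightarrow(1)$. The implication $(1)\Rightarrow(2)$ is already part (2) of Proposition \ref{weaklycompact}, and $(2)\Rightarrow(3)$ is immediate on specialising to $X=\ell^{1}$. So all the real content sits in $(3)\Rightarrow(1)$.

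For $(3)\Rightarrow(1)$ I would appeal to Theorem \ref{sequence} to reduce the task to showing that every weakly null sequence $(x_{n})\subset E$ is $uaw$-null. Given such a sequence, the natural move is to encode it as an operator from $\ell^{1}$: define $T\colon \ell^{1}\rightarrow E$ by $T\bigl((a_{n})\bigr)=\sum_{n}a_{n}x_{n}$, so that $Te_{n}=x_{n}$, where $(e_{n})$ is the unit vector basis of $\ell^{1}$. Since weakly null sequences are norm bounded, $T$ is well defined and bounded. To see that $T$ is weakly compact, observe that $K=\{0,\pm x_{1},\pm x_{2},\ldots\}$ is weakly compact by Eberlein--\v{S}mulian (every sequence in $K$ has a weakly null subsequence), so by Krein's theorem the closed convex hull of $K$ is weakly compact; since $T(B_{\ell^{1}})$ is contained in this hull, $T$ is weakly compact.

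Hypothesis (3) now makes $T$ sequentially $uaw$-compact. To conclude $x_{n}\overset{uaw}{\longrightarrow}0$, I would argue by contradiction: if this fails, I can choose $u\in E^{+}$, a positive $x'\in E'$, an $\varepsilon>0$, and a subsequence $(x_{n_{k}})$ with $x'\bigl(|x_{n_{k}}|\wedge u\bigr)\geq \varepsilon$ for every $k$. The sequence $(e_{n_{k}})$ is norm bounded in $\ell^{1}$, so sequential $uaw$-compactness of $T$ yields a sub-subsequence along which $(x_{n_{k}})$ is $uaw$-convergent in $E$ to some $y$. This sub-subsequence remains weakly null, so Lemma \ref{Lemma-uaw} forces $y=0$, which gives $x'\bigl(|x_{n_{k_{j}}}|\wedge u\bigr)\to 0$ and contradicts the uniform lower bound $\varepsilon$. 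The main technical point is the weak compactness of $T$, which is exactly where Eberlein--\v{S}mulian and Krein's theorem enter; everything else is a direct application of Theorem \ref{sequence}, Lemma \ref{Lemma-uaw}, and the hypothesis.
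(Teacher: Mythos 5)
Your proposal is correct and follows essentially the same route as the paper, which for $(3)\Rightarrow(1)$ simply defers to the analogous argument of \cite[Theorem 9.11]{Kan17} ``by using Theorem \ref{sequence} and Lemma \ref{Lemma-uaw}''; you have just written out the details the paper omits (the operator $Te_n=x_n$ on $\ell^1$, weak compactness via Krein's theorem, and the subsequence argument combined with Lemma \ref{Lemma-uaw}). The only cosmetic slip is the phrase ``every sequence in $K$ has a weakly null subsequence'' --- a constant sequence $x_1,x_1,\dots$ does not --- but every sequence in $K$ does have a weakly convergent subsequence with limit in $K$, which is all that is needed.
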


\begin{proof}
$\left( 1\right) \Rightarrow \left( 2\right) $ Follows from Proposition \ref%
{weaklycompact} (2).

$\left( 2\right) \Rightarrow \left( 3\right) $ Obvious.

$\left( 3\right) \Rightarrow \left( 1\right) $ The proof is very similar to
that \cite[Theorem 9.11]{Kan17} (by using Theorem \ref{sequence} and Lemma %
\ref{Lemma-uaw}).
\end{proof}

Recall that an operator $T:E\rightarrow X$, from a Banach lattice into a
Banach space, is called \emph{AM-compact} (resp. \emph{order weakly compact}%
) if it maps order intervals to relatively compact (resp. weakly compact)
sets. It is proved in \cite[Proposition 9.9]{Kan17} that every order bounded
$un$-compact operator is AM-compact. In the next, we prove that this result
is also true for order bounded sequentially compact operators. Also, we
prove that order bounded (sequentially) $uaw$-compact operators are order
weakly compacts.

\begin{proposition}
\label{orderweakly}Let $T:E\rightarrow F$ be an order bounded operator
betwen Banach lattices.

\begin{enumerate}
\item If $T$ is sequentially $un$-compact operator then $T$ is AM-compact.

\item If $T$ is (sequentially) $uaw$-compact operator then $T$ is order
weakly compact.
\end{enumerate}
\end{proposition}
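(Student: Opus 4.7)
The plan is to reduce both parts to a single observation: inside an order interval, $un$-convergence collapses to norm convergence and $uaw$-convergence collapses to weak convergence. Concretely, if a sequence $(y_{n})$ and its limit $y$ both lie in some order interval $[-w,w]\subset F$, then $|y_{n}-y|\wedge w=|y_{n}-y|$; so $y_{n}\overset{un}{\longrightarrow}y$ forces $\Vert y_{n}-y\Vert\to 0$, while $y_{n}\overset{uaw}{\longrightarrow}y$ forces $|y_{n}-y|\overset{w}{\longrightarrow}0$ and hence $y_{n}\overset{w}{\longrightarrow}y$ (after splitting each $x^{\prime}\in F^{\prime}$ into its positive and negative parts). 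Together with the order-boundedness of $T$, this essentially closes both parts.

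First I would fix an arbitrary order interval $[-u,u]\subset E$ and use the order-boundedness of $T$ to produce $v\in F^{+}$ with $T([-u,u])\subset [-v,v]$. Since $[-u,u]$ is norm bounded, any sequence or net in $[-u,u]$ is norm bounded, so the (sequential) compactness hypothesis on $T$ applies directly to its image.

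For (1), given $(x_{n})\subset [-u,u]$, sequential $un$-compactness would extract a subsequence with $Tx_{n_{k}}\overset{un}{\longrightarrow}y$ for some $y\in F$. Setting $w:=v+|y|$ gives $|Tx_{n_{k}}-y|\leq |Tx_{n_{k}}|+|y|\leq w$, so the defining condition $\Vert\,|Tx_{n_{k}}-y|\wedge w\,\Vert\to 0$ just reads $\Vert Tx_{n_{k}}-y\Vert\to 0$. Hence $T([-u,u])$ is relatively norm compact, i.e.\ $T$ is AM-compact. Part (2) would be handled identically with $uaw$ in place of $un$: from a sequence (resp.\ net) in $[-u,u]$ one extracts a $uaw$-convergent subsequence (resp.\ subnet) of its image, and the same choice $w=v+|y|$ yields $|Tx_{n_{k}}-y|\overset{w}{\longrightarrow}0$, hence $Tx_{n_{k}}\overset{w}{\longrightarrow}y$. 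In the sequential case one invokes Eberlein--\v{S}mulian to conclude that $T([-u,u])$ is relatively weakly compact; in the net case the conclusion is immediate from the definition.

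I do not foresee any serious obstacle: the only step that is not a direct unravelling of definitions is the passage from $|z_{n}|\overset{w}{\longrightarrow}0$ to $z_{n}\overset{w}{\longrightarrow}0$, which is a one-line consequence of the Riesz decomposition $x^{\prime}=(x^{\prime})^{+}-(x^{\prime})^{-}$ in $F^{\prime}$.
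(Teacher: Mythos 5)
Your proposal is correct and follows essentially the same route as the paper: both arguments use the order-boundedness of $T$ to place the image of an order interval inside an order interval of $F$, where $un$-convergence becomes norm convergence and $uaw$-convergence becomes (absolute) weak convergence. You merely make explicit (via $w=v+|y|$ and the Eberlein--\v{S}mulian step) what the paper leaves implicit.
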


\begin{proof}
1) Assume that $T$ is sequentially $un$-compact. It suffices to show that
every order bounded sequence $(x_{n})$ in $E$ its image has a norm
convergent subsequence. Since $T$ is sequentially $un$-compact, there is a
subsequence $\left( x_{n_{k}}\right) $ such that $T\left( x_{n_{k}}\right)
\overset{un}{\longrightarrow }x$ for some $x\in F$. As $T$ is order bounded,
$\left( T\left( x_{n_{k}}\right) \right) $ is order bounded, and, therefore,
$T\left( x_{n_{k}}\right) \overset{\left\Vert \cdot \right\Vert }{%
\longrightarrow }x$ as desired.

2) We establish the result when $T$ is $uaw$-compact; the other case is
similar. It suffices to show that every order bounded net $\left( x_{\alpha
}\right) $ in $E$ its image has a weak convergent subnet. Since $T$ is $uaw$%
-compact, there is a subnet $\left( y_{\beta }\right) $ of $(x_{\alpha })$
such that $T\left( y_{\beta }\right) \overset{uaw}{\longrightarrow }x$ for
some $x\in F$. As $T$ is order bounded, $\left( T\left( y_{\beta }\right)
\right) $ is order bounded, and, therefore, $T\left( y_{\beta }\right)
\longrightarrow x$ for the absolute weak topology $\left\vert \sigma
\right\vert \left( E,E^{\prime }\right) $. Thus, $T\left( y_{\beta }\right)
\overset{w}{\longrightarrow }x$, as desired.
\end{proof}

Note that the converse of (2) in Proposition \ref{orderweakly} is false: the
identity operator on $c_{0}$ is order weakly compact but is neither $uaw$%
-compact nor sequentially $uaw$-compact (by Proposition 13 of \cite{Zabeti18}%
).

In \cite[Example 9.7]{Kan17}, the authors presented an example to show that,
in general, $un$-compactness is not inherited under domination. In fact,
there exist two operators $S,T:\ell ^{1}\rightarrow L^{1}\left[ 0,1\right] $
satisfying $0\leq T\leq S$ with $S$ is (compact) $un$-compact and $T$ is
neither $un$-compact nor sequentially $un$-compact. Then $S$ is $uaw$%
-compact, and since $L^{1}\left[ 0,1\right] $ is order continuous, it
follows from \cite[Theorem 4]{Zabeti18} that $T$ is neither $uaw$-compact
nor sequentially $uaw$-compact.

\begin{proposition}
Let $S,T:E\rightarrow F$ be two positive operators betwen Banach lattices
satisfying $0\leq S\leq T$.

\begin{enumerate}
\item Suppose that the lattice operations of $F$ are sequentially weakly
continuous and and both $E^{\prime }$ and $F^{\prime }$ are order
continuous. If $T$ is sequentially $uaw$-compact then so is $S$.

\item Suppose that $F$ is discrete and both $F$ and $F^{\prime }$ are order
continuous. If $T$ is (sequentially) $uaw$-compact (resp. $un$-compact) then
so is $S$.
\end{enumerate}
\end{proposition}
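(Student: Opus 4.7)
The plan is to route both parts through the equivalences between sequential $uaw$- (respectively $un$-) compactness and weak compactness furnished by Proposition~\ref{weaklycompact}, and then to finish with the classical Aliprantis--Burkinshaw domination theorem for weakly compact operators. Under the hypotheses of (1), clause~(3) of Proposition~\ref{weaklycompact} converts ``sequentially $uaw$-compact'' into ``weakly compact'' for every operator from a Banach space into $F$; under the hypotheses of (2), clause~(4) of the same proposition collapses weak compactness, (sequential) $uaw$-compactness and (sequential) $un$-compactness into a single class of operators into $F$.

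For (1), I would first apply Proposition~\ref{weaklycompact}(3) to $T$ to deduce that $T$ is weakly compact. Since $E^{\prime }$ has order continuous norm, the Aliprantis--Burkinshaw domination theorem (which concludes weak compactness of $S$ from that of $T$ whenever $0\le S\le T$ and either $E^{\prime }$ or $F$ has order continuous norm; see \cite{Alip}) forces $S$ to be weakly compact. Running Proposition~\ref{weaklycompact}(3) in the reverse direction, now applied to $S$, then yields that $S$ is sequentially $uaw$-compact.

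For (2), the same three-step scheme applies, with Proposition~\ref{weaklycompact}(4) replacing (3), and with the hypothesis needed for the domination theorem now supplied by the order continuity of $F$ rather than of $E^{\prime }$. Because clause~(4) equates weak compactness with both (sequential) $uaw$-compactness and (sequential) $un$-compactness, the single pass through weak compactness simultaneously delivers the ``resp.\ $un$-compact'' alternative at no extra cost.

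The main obstacle, insofar as there is one, is essentially bookkeeping: one must make sure that the version of the weak compactness domination theorem one cites really requires only the order continuity of $E^{\prime }$ in case~(1), and only that of $F$ in case~(2), both of which are hypotheses of the proposition. Once that citation is matched to our setting, the proof reduces to a clean composition of Proposition~\ref{weaklycompact} with this standard result, and no further ingredients are needed.
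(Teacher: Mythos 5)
Your proposal is correct and follows essentially the same route as the paper, whose proof is exactly the combination of Proposition~\ref{weaklycompact} (clauses (3) and (4) for the codomain $F$) with the Aliprantis--Burkinshaw domination theorem for weakly compact operators (Theorem~5.31 of \cite{Alip}), using the order continuity of $E^{\prime}$ in case (1) and of $F$ in case (2).
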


\begin{proof}
Follows immediately from Proposition \ref{weaklycompact} and \cite[Theorem
5.31]{Alip}.
\end{proof}

\begin{rem}
The assumption \textquotedblleft $E^{\prime }$ is order
continuous\textquotedblright\ of (1) (resp. \textquotedblleft $F$ is order
continuous\textquotedblright\ of (2)) in the previous result cannot be
removed. Indeed, it follows from \cite[Example 5.30]{Alip} that there exist
two operators $S,T:L^{1}\left[ 0,1\right] \rightarrow \ell ^{\infty }$
satisfying $0\leq S\leq T$ with $T$ is (compact) weakly compact and $S$ is
not weakly compact. Then $T$ is (sequentially) $uaw$-compact, and since $%
\left( \ell ^{\infty }\right) ^{\prime }$ is order continuous, it follows
from Proposition \ref{weaklycompact} (1) that $S$ is neither $uaw$-compact
nor sequentially $uaw$-compact.
\end{rem}

It is proved in \cite[p. 278]{Kan17} that if an operator $T:X\rightarrow E$
is sequentially $un$-compact and semi-compact then $T$ is compact.

\begin{proposition}
If an operator $T:X\rightarrow E$, from a Banach space $X$ into a Banach
lattice $E$, is (sequentially) $uaw$-compact and semi-compact then $T$ is
weakly compact.
\end{proposition}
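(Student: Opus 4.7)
The plan is to combine the two hypotheses so that a bounded sequence in $X$, after passing to a subsequence via $uaw$-compactness, becomes almost order bounded thanks to semi-compactness, and then invoke Proposition \ref{almostorder} to upgrade $uaw$-convergence to weak convergence. I will treat the sequential case first (which, by Eberlein--\v Smulian, suffices for weak compactness), and then note the easy modification for nets.

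First I would fix a norm bounded sequence $(x_n)\subset X$, say in $r\,B_X$ for some $r>0$. By (sequential) $uaw$-compactness of $T$, there exists a subsequence $(x_{n_k})$ and some $y\in E$ with $T(x_{n_k})\overset{uaw}{\longrightarrow}y$. Next I would use that $T$ is semi-compact, which means $T(r\,B_X)$ is almost order bounded: for every $\varepsilon>0$ there is $u\in E^{+}$ with $\bigl\|\bigl(|Tx|-u\bigr)^{+}\bigr\|<\varepsilon$ for every $x\in r\,B_X$. In particular, the sequence $\bigl(T(x_{n_k})\bigr)$ is almost order bounded.

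A short observation I would record is that almost order boundedness is stable under translation by a single vector $y$: from $|T(x_{n_k})-y|\le |T(x_{n_k})|+|y|$ one gets
\begin{equation*}
\bigl(|T(x_{n_k})-y|-(u+|y|)\bigr)^{+}\le \bigl(|T(x_{n_k})|-u\bigr)^{+},
\end{equation*}
so $\bigl(T(x_{n_k})-y\bigr)$ is almost order bounded. Combined with $T(x_{n_k})-y\overset{uaw}{\longrightarrow}0$, Proposition \ref{almostorder} yields $|T(x_{n_k})-y|\overset{w}{\longrightarrow}0$, hence $T(x_{n_k})\overset{w}{\longrightarrow}y$. Thus every bounded sequence in $X$ has a subsequence whose image is weakly convergent in $E$, so $T(B_X)$ is relatively weakly compact by the Eberlein--\v Smulian theorem, i.e., $T$ is weakly compact.

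For the non-sequential ($uaw$-compact) case the argument is verbatim with subnets in place of subsequences, using Proposition \ref{almostorder} in its net form to conclude $T(y_\beta)\overset{w}{\longrightarrow}y$ along a subnet; this directly shows $T(B_X)$ is relatively weakly compact. I don't anticipate a genuine obstacle: the only mildly delicate point is the translation-stability of almost order boundedness, which just needs the elementary lattice inequality above so that Proposition \ref{almostorder} can be applied to the shifted net $T(x_{n_k})-y$ rather than to $T(x_{n_k})$ itself.
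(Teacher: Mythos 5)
Your proof is correct and follows essentially the same route as the paper's: pass to a $uaw$-convergent subsequence, use semi-compactness to get almost order boundedness of the (translated) image sequence, and apply Proposition \ref{almostorder} to upgrade to weak convergence. The only difference is that you explicitly verify the translation-stability of almost order boundedness, which the paper asserts without proof.
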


\begin{proof}
We will prove the statement for the sequential case; the other case is
analogous. Let $(x_{n})$ be a norm bounded sequence in $X$. There is a
subsequence $\left( x_{n_{k}}\right) $ such that $T\left( x_{n_{k}}\right)
\overset{uaw}{\longrightarrow }x$ for some $x\in E$. Since $T$ is
semi-compact, the sequence $\left( T\left( x_{n_{k}}\right) \right) $ is
almost order bounded (and so is $\left( T\left( x_{n_{k}}\right) -x\right) $%
). Therefore, $\left\vert T\left( x_{n_{k}}\right) -x\right\vert \overset{w}{%
\longrightarrow 0}$ (by Proposition \ref{almostorder}). Thus, $T\left(
x_{n_{k}}\right) \overset{w}{\longrightarrow }x$, as desired.
\end{proof}

Following O. Zabeti \cite{Zabeti20}, we shall say that a continuous operator
$T:E\rightarrow X$, where $X$ is a Banach space, is called \emph{unbounded
continuous }if for each bounded net $(x_{\alpha })\subset E$, $x_{\alpha }%
\overset{uaw}{\longrightarrow }0$ implies $T\left( x_{\alpha }\right)
\overset{w}{\longrightarrow }0$. It is \emph{sequentially unbounded
continuous} provided that the property happens for sequences. Moreover, a
continuous operator $T:E\rightarrow F$ between Banach lattices, is said to
be $uaw$\emph{-continuous }(\emph{sequentially }$uaw$-continuous\emph{)} if $%
T$ maps every norm bounded $uaw$-null net (sequence) into a $uaw$-null net
(sequence).

The following result tells us when every continuous operator is unbounded
continuous.

\begin{theorem}
\label{unbounded-cont}Let $E$ be a Banach lattice and $X$ be a non-zero
Banach space. The following are equivalent:

\begin{enumerate}
\item $E^{\prime }$ is order continuous;

\item Every continuous operator $T:E\rightarrow X$ is unbounded continuous;

\item Every continuous operator $T:E\rightarrow X$ is sequentially unbounded
continuous.
\end{enumerate}
\end{theorem}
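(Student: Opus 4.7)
The plan is to prove the cyclic implications $(1)\Rightarrow (2)\Rightarrow (3)\Rightarrow (1)$, the last being the only nontrivial step.

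For $(1)\Rightarrow (2)$, I would invoke \cite[Theorem 7]{Zabeti18}, which says that order continuity of $E'$ is equivalent to the statement that every norm bounded $uaw$-null net in $E$ is weakly null. So if $T:E\to X$ is continuous and $(x_\alpha)\subset E$ is bounded with $x_\alpha \overset{uaw}{\longrightarrow} 0$, then $x_\alpha \overset{w}{\longrightarrow} 0$, and since norm continuity of $T$ yields weak-to-weak continuity, $T(x_\alpha)\overset{w}{\longrightarrow} 0$. The implication $(2)\Rightarrow (3)$ is immediate since sequences are (countable) nets.

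For the crucial $(3)\Rightarrow (1)$ I would argue by contraposition. Assume $E'$ is not order continuous. Applying \cite[Theorem 7]{Zabeti18} once more, there exists a norm bounded sequence $(x_n)\subset E$ that is $uaw$-null but \emph{not} weakly null. Hence I can fix $\varphi\in E'$ so that $\varphi(x_n)\not\to 0$, and, after passing to a subsequence (which remains norm bounded and $uaw$-null), I may assume $|\varphi(x_n)|\ge \varepsilon$ for all $n$ and some $\varepsilon>0$. Now I would use the hypothesis that $X\neq \{0\}$ to pick a non-zero vector $x_0\in X$ and construct the rank-one continuous operator
\begin{equation*}
T:E\to X,\qquad T(y)=\varphi(y)\,x_0.
\end{equation*}
Then $T(x_n)=\varphi(x_n)\,x_0$, and Hahn--Banach gives some $\psi\in X'$ with $\psi(x_0)\neq 0$, so that $\psi(T(x_n))=\varphi(x_n)\psi(x_0)$ stays bounded away from $0$. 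Thus $T(x_n)\not\overset{w}{\longrightarrow} 0$ while $(x_n)$ is a norm bounded $uaw$-null sequence, contradicting (3).

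The main (and really only) obstacle is $(3)\Rightarrow (1)$, where one must exploit the assumption $X\neq\{0\}$ in an essential way: without it, $T\equiv 0$ would trivially be sequentially unbounded continuous and the characterization would fail. The rank-one construction above is the natural way to transport the witness sequence in $E$ through an arbitrary non-zero target $X$.
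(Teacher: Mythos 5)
Your proposal is correct, and the easy implications $(1)\Rightarrow(2)\Rightarrow(3)$ match the paper exactly (both reduce $(1)\Rightarrow(2)$ to Theorem 7 of \cite{Zabeti18} plus weak-to-weak continuity of bounded operators). For the key step $(3)\Rightarrow(1)$, however, you take a genuinely different and shorter route: you invoke the \emph{sequential} form of \cite[Theorem 7]{Zabeti18} as a black box to produce a norm bounded $uaw$-null sequence $(x_n)$ that is not weakly null, extract $\varphi\in E'$ and a subsequence with $|\varphi(x_n)|\geq\varepsilon$ (legitimate, since subsequences of $uaw$-null sequences are $uaw$-null), and then transport the failure through the rank-one operator $T=\varphi(\cdot)\,x_0$. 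The paper instead works from scratch: it uses Zaanen's Theorem 116.1 to get a norm bounded \emph{disjoint} positive sequence $(u_n)$ with $\varphi(u_n)>\varepsilon$, Theorem 116.3 to get the components $\varphi_n$ of $\varphi$ in the carriers $C_{u_n}$ satisfying the biorthogonality $\varphi_n(u_m)=\varphi(u_n)\delta_{nm}$, builds the operator $T(x)=\bigl(\sum_n \varphi_n(x)/\varphi(u_n)\bigr)y$, and uses Lemma 2 of \cite{Zabeti18} (disjoint norm bounded sequences are $uaw$-null) to see that $(u_n)$ witnesses the failure, with the clean identity $T(u_n)=y$. What your argument buys is brevity and transparency; what the paper's buys is self-containedness relative to classical results of Zaanen (rather than the sequential case of Zabeti's theorem) and an explicit construction that the author reuses verbatim for the later theorem on $uaw$-continuous operators into a Banach lattice. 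Both proofs exploit $X\neq\{0\}$ in the same essential way, via a non-zero target vector.
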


\begin{proof}
$\left( 1\right) \Rightarrow \left( 2\right) $ Follows immediately from
Theorem 7 of \cite{Zabeti18}.

$\left( 2\right) \Rightarrow \left( 3\right) $ Obvious.

$\left( 3\right) \Rightarrow \left( 1\right) $ Assume that $E^{\prime }$ is
not order continuous. To finish the proof, we have to construct a continuous
operator $T:E\longrightarrow X$ which is not sequentially unbounded
continuous.

Since the norm of $E^{\prime }$\ is not order continuous, it follows from
Theorem 116.1 of \cite{Z2} that there is a norm bounded disjoint sequence $%
\left( u_{n}\right) $ of positive elements in $E$ which does not converge
weakly to zero. Hence, we may assume that $\left\Vert u_{n}\right\Vert \leq
1 $ for all $n$ and also that for some $0\leq \varphi \in E^{\prime }$ and
some $\varepsilon >0$ we have $\varphi \left( u_{n}\right) >\varepsilon $
for all $n$. Moreover, by Theorem 116.3 (i) of \cite{Z2}, the components $%
\varphi _{n}$ of $\varphi $ in the carrier $C_{u_{n}}$ form an order bounded
disjoint sequence in $\left( E^{\prime }\right) ^{+}$ such that
\begin{equation*}
\varphi _{n}\left( u_{n}\right) =\varphi \left( u_{n}\right) \text{ for all }%
n\text{ and }\varphi _{n}\left( u_{m}\right) =0\text{ if }n\neq m.\eqno{(*)}
\end{equation*}%
Note that $0\leq \varphi _{n}\leq \varphi $ holds for all $n$.

On the other hand, let $y$ be a non-zero vector in $X$ \ and consider the
operator $T:E\rightarrow X$ defined by defined by%
\begin{equation*}
T\left( x\right) =\left( \sum_{n=1}^{\infty }\frac{\varphi _{n}(x)}{\varphi
\left( u_{n}\right) }\right) y
\end{equation*}%
for all $x\in E$. Since
\begin{equation*}
\sum_{n=1}^{\infty }\left\vert \frac{\varphi _{n}(x)}{\varphi \left(
u_{n}\right) }\right\vert \leq \frac{1}{\varepsilon }\sum_{n=1}^{\infty
}\varphi _{n}(\left\vert x\right\vert )\leq \frac{1}{\varepsilon }\varphi
(\left\vert x\right\vert )
\end{equation*}%
holds for each $x\in E$, the operator $T$ is well defined.

We claim that $T$ is not sequentially unbounded continuous. To see this,
note that $\left( u_{n}\right) $ is a norm bounded disjoint sequence and $%
u_{n}\overset{uaw}{\longrightarrow }0$ (by Lemma 2 of \cite{Zabeti18}). But,
from $(\ast )$ we have $T(u_{n})=y\overset{w}{\nrightarrow }0$. Thus $T$ is
not sequentially unbounded continuous, and this ends the proof of the
Theorem.
\end{proof}

\begin{rem}
The previous result improve Theorem 1 of \cite{Zabeti20}. In fact, if every
continuous operator $T:\ell ^{1}\rightarrow X$ is (sequentially) unbounded
continuous then $X=\left\{ 0\right\} $.
\end{rem}

\begin{cor}
For a Banach lattice $E$, the following are equivalent:

\begin{enumerate}
\item Every continuous operator $T:E\rightarrow E$ is unbounded continuous;

\item $E^{\prime }$ is order continuous.
\end{enumerate}
\end{cor}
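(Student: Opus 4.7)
The plan is to obtain this corollary as a direct specialization of Theorem \ref{unbounded-cont} with $X=E$. Since the theorem already characterizes order continuity of $E'$ by ``every continuous operator $T:E\to X$ is (sequentially) unbounded continuous'' for any non-zero Banach space $X$, the only thing left to verify is that $X=E$ is an admissible choice.

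For $(2)\Rightarrow(1)$, I would assume $E'$ is order continuous. If $E=\{0\}$ the statement is vacuous, so I may assume $E$ is a non-zero Banach space and apply Theorem \ref{unbounded-cont} with $X:=E$; the implication $(1)\Rightarrow(2)$ there yields that every continuous operator $T:E\to E$ is unbounded continuous.

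For $(1)\Rightarrow(2)$, I would again treat $E=\{0\}$ as trivial (then $E'=\{0\}$ is order continuous by default). If $E\neq\{0\}$, then $X:=E$ is a non-zero Banach space, and the hypothesis is exactly condition $(2)$ of Theorem \ref{unbounded-cont} for this choice of $X$; the implication $(2)\Rightarrow(1)$ (or $(3)\Rightarrow(1)$) of that theorem then gives order continuity of $E'$.

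There is no real obstacle: the content is entirely contained in Theorem \ref{unbounded-cont}, and the operator constructed in the $(3)\Rightarrow(1)$ part of its proof already takes values in a space that can be replaced by $E$ itself (by choosing the fixed non-zero vector $y$ inside $E$). The only minor point to flag is the trivial case $E=\{0\}$, which has to be excluded in order to invoke the ``non-zero Banach space'' hypothesis of Theorem \ref{unbounded-cont}.
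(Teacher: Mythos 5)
Your proposal is correct and matches the paper's intent exactly: the corollary is stated without proof precisely because it is the specialization $X=E$ of Theorem \ref{unbounded-cont}, and your handling of the trivial case $E=\{0\}$ (where both conditions hold vacuously) is the only point requiring any care. Nothing further is needed.
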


The following result follows immediately from Theorems \ref{sequence} and %
\ref{unbounded-cont}.

\begin{cor}
Let $E$ and $F$ be two Banach lattices. If $E^{\prime }$ is order continuous
and the lattice operations of $F$ are weakly sequentially continuous, then
every continuous operator $T:E\rightarrow F$ is\ sequentially $uaw$%
-continuous.
\end{cor}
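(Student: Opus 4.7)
The plan is to prove this corollary by chaining together the two preceding results (Theorem \ref{sequence} applied to $F$, and Theorem \ref{unbounded-cont} or rather Theorem 7 of \cite{Zabeti18} applied to $E$) with the elementary fact that bounded linear operators are weak-to-weak continuous. The goal is: given any norm bounded sequence $(x_n)\subset E$ with $x_n \xrightarrow{uaw} 0$, show $T(x_n)\xrightarrow{uaw} 0$ in $F$.

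First, I would invoke the order continuity of $E'$. By Theorem 7 of \cite{Zabeti18}, every norm bounded $uaw$-null sequence in $E$ is weakly null, so from our hypothesis $x_n \xrightarrow{uaw} 0$ and the fact that $(x_n)$ is norm bounded we obtain $x_n \xrightarrow{w} 0$. Next, since $T\colon E\to F$ is a bounded linear operator, it is automatically continuous from the weak topology of $E$ to the weak topology of $F$; hence $T(x_n)\xrightarrow{w} 0$ in $F$. Note also that $(T(x_n))$ is norm bounded in $F$ because $T$ is continuous and $(x_n)$ is norm bounded, although for the final step this is not even needed.

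Finally, I would apply Theorem \ref{sequence} to $F$: since the lattice operations of $F$ are sequentially weakly continuous, every weakly null sequence in $F$ is $uaw$-null. Therefore $T(x_n)\xrightarrow{uaw} 0$, which is exactly the sequential $uaw$-continuity of $T$.

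There is no real obstacle in this argument; it is a straightforward concatenation of three implications, and the only thing to be careful about is verifying that $T$ preserves weak convergence (which is standard) and that the norm-boundedness hypothesis needed to invoke Theorem 7 of \cite{Zabeti18} in the first step is supplied by the definition of sequential $uaw$-continuity itself.
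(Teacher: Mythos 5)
Your argument is correct and is essentially the paper's own proof: the paper derives the corollary immediately from Theorem \ref{unbounded-cont} (whose content for sequences is exactly your first two steps, namely that order continuity of $E'$ makes $T$ send norm bounded $uaw$-null sequences to weakly null sequences) combined with Theorem \ref{sequence} applied to $F$. Nothing further is needed.
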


The following result follows immediately from Corollary \ref{cor1} and it
generalize Corollary 4 of \cite{Zabeti20}.

\begin{proposition}
Let $E$ and $F$ be two Banach lattices. If $F^{\prime }$ is order continuous
and the lattice operations of $F$ are weakly sequentially continuous, then a
continuous operator $T:E\rightarrow F$ is\ sequentially unbounded continuous
if and only if it is sequentially $uaw$-continuous.
\end{proposition}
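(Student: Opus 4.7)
The plan is to invoke Corollary \ref{cor1} applied to the codomain $F$, and then compare the two notions of sequential continuity term by term on the image sequence. Since $F'$ is order continuous and the lattice operations of $F$ are sequentially weakly continuous, Corollary \ref{cor1} yields the equivalence
\[
y_n \overset{w}{\longrightarrow } 0 \quad \Longleftrightarrow \quad y_n \overset{uaw}{\longrightarrow } 0
\]
for every norm bounded sequence $(y_n)\subset F$. This is the single engine driving the whole argument.

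Next I would unfold the definitions. Suppose $(x_n)\subset E$ is a norm bounded sequence with $x_n\overset{uaw}{\longrightarrow }0$. Because $T$ is continuous, the image sequence $(T(x_n))$ is norm bounded in $F$; hence the equivalence from Corollary \ref{cor1} applies to $(T(x_n))$. Therefore $T(x_n)\overset{w}{\longrightarrow }0$ if and only if $T(x_n)\overset{uaw}{\longrightarrow }0$. The first statement is exactly what it means for $T$ to be sequentially unbounded continuous applied to $(x_n)$, and the second is exactly what it means for $T$ to be sequentially $uaw$-continuous applied to $(x_n)$. Running this over all norm bounded $uaw$-null sequences $(x_n)$ in $E$ gives the desired equivalence.

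There is essentially no obstacle here: the argument is a direct translation of Corollary \ref{cor1} from $F$ to images $T(x_n)$, using only boundedness of $T$ to transport the norm bound. The only small care is to note that both notions of continuity are stated for \emph{norm bounded} $uaw$-null inputs, which matches the hypothesis of Corollary \ref{cor1} exactly, so no extra hypothesis on $(x_n)$ beyond boundedness is needed.
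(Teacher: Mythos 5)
Your proposal is correct and matches the paper exactly: the paper states that this proposition ``follows immediately from Corollary \ref{cor1}'' applied to $F$, which is precisely your argument of transporting the norm bound through $T$ and using the per-sequence equivalence of weak and $uaw$ convergence on $(T(x_n))$. No issues.
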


\begin{theorem}
Let $E$ be a Banach lattice and $F$ be a non-zero Banach lattice. If every
continuous operator $T:E\rightarrow F$ is (sequentially) $uaw$-continuous
then $E^{\prime }$ is order continuous.
\end{theorem}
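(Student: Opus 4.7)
The plan is to argue by contrapositive: assume $E'$ is not order continuous and construct a continuous operator $T:E\to F$ that is not sequentially $uaw$-continuous. Since $F$ is an arbitrary non-zero Banach lattice (in particular a non-zero Banach space), the construction should closely parallel the one used in the proof of Theorem \ref{unbounded-cont}, with the target Banach space replaced by $F$ and only the final verification step adapted to the $uaw$ setting.

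First, because the norm of $E'$ is not order continuous, Theorem 116.1 of \cite{Z2} provides a norm bounded disjoint sequence $(u_n)\subset E^+$ with $\|u_n\|\le 1$, together with $\varphi\in(E')^+$ and $\varepsilon>0$ such that $\varphi(u_n)>\varepsilon$ for every $n$. Theorem 116.3(i) of \cite{Z2} then yields the order bounded disjoint sequence $(\varphi_n)\subset(E')^+$ of components of $\varphi$ in the carriers $C_{u_n}$ satisfying relation $(*)$ of Theorem \ref{unbounded-cont}. Fixing any non-zero $y\in F$, I define
\[
T(x)=\left(\sum_{n=1}^{\infty}\frac{\varphi_n(x)}{\varphi(u_n)}\right)y.
\]
The same absolutely convergent estimate used in Theorem \ref{unbounded-cont} shows that $T$ is well-defined, linear, and continuous. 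From $(*)$ one gets $T(u_n)=y$ for every $n$, and by Lemma 2 of \cite{Zabeti18} the disjoint bounded sequence $(u_n)$ is $uaw$-null in $E$.

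The only step that genuinely differs from the proof of Theorem \ref{unbounded-cont} is the verification that the constant sequence $T(u_n)=y$ fails to be $uaw$-null in $F$. For this I test the definition with $u=|y|\in F^+$: the sequence $|T(u_n)|\wedge u = |y|\wedge|y| = |y|$ is constant and non-zero, so it cannot converge weakly to $0$ (a weakly null constant sequence must vanish, and $y\neq 0$ gives $|y|\neq 0$). Hence $T(u_n)$ is not $uaw$-null in $F$, and $T$ is therefore not sequentially $uaw$-continuous, finishing the contrapositive. I do not expect a real obstacle: the heavy lifting (existence of the disjoint sequence, construction and continuity of $T$, the relation $(*)$) is borrowed wholesale from Theorem \ref{unbounded-cont}; the one novel point is the elementary observation that a non-zero constant sequence in a Banach lattice is never $uaw$-null, which is precisely what makes having $F$ a lattice (rather than merely a Banach space) useful.
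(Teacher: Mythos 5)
Your proposal is correct and follows exactly the route the paper intends: the paper's own proof consists of the single remark that the argument is similar to $(3)\Rightarrow(1)$ of Theorem \ref{unbounded-cont}, and you carry out precisely that adaptation, including the one genuinely new detail (testing $uaw$-convergence of the constant sequence $T(u_n)=y$ against $u=|y|$ to see it is not $uaw$-null).
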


\begin{proof}
The proof is similar to that $\left( 3\right) \Rightarrow \left( 1\right) $
of Theorem \ref{unbounded-cont}.
\end{proof}

\begin{rem}
The previous result improve Theorem 7 of \cite{Zabeti20}. In fact, if every
continuous operator $T:L_{1}\left[ 0,1\right] \rightarrow F$ is
(sequentially) $uaw$-continuous then $F=\left\{ 0\right\} $.
\end{rem}

An operator $T$ from a Banach lattice $E$ into a Banach space $X$ is said to
be $b$-weakly compact, if it maps each subset of $E$ which is $b$-order
bounded (i.e. order bounded in the topological bidual $E^{\prime \prime }$)
into a relatively weakly compact subset in $X$ \cite{Alpay1}. Several
interesting characterizations of this class of operators are given in \cite%
{Alpay1, Alpay3, Alpay4, Aqz1}.

To prove next result, we need the following lemma.

\begin{lem}
\label{Aqz1}Let $T$ be a $b$-weakly compact operator from a Banach lattice $%
E $ into a Banach space $X$. If $(x_{n})$ is a $b$-order bounded sequence of
$E $, then for each $\varepsilon >0$ there exists some $u\in E^{+}$ such that%
\begin{equation*}
q_{T}((|x_{n}|-u)^{+})\leq \varepsilon
\end{equation*}%
holds for all $n$, where $q_{T}$ is the lattice seminorm on $E$ defined by%
\begin{equation*}
q_{T}\left( x\right) =\sup \left\{ \left\Vert T\left( y\right) \right\Vert
:\left\vert y\right\vert \leq \left\vert x\right\vert \right\}
\end{equation*}%
for each $x\in E.$
\end{lem}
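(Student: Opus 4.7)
The plan is a proof by contradiction. Suppose the conclusion fails: there exists $\varepsilon_0>0$ such that, for every $u\in E^+$, some index $n$ satisfies $q_T((|x_n|-u)^+)>\varepsilon_0$. Since $(x_n)$ is $b$-order bounded, fix $0\le x''\in E''$ with $|x_n|\le x''$ for all $n$, so that any element of $E$ dominated by some $|x_n|$ is automatically $b$-order bounded.

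The main step is to produce, out of this failure condition, a disjoint $b$-order bounded positive sequence on which $q_T$ stays uniformly bounded below. Concretely, I would construct inductively a strictly increasing subsequence $(n_k)$ and candidates $w_k:=(|x_{n_k}|-u_k)^+$ with $u_k:=(k+1)\sum_{j<k}|x_{n_j}|$, choosing $n_k>n_{k-1}$ via the failure hypothesis so that $q_T(w_k)>\varepsilon_0$. Replacing $u$ by $u+\sum_{n\le N}|x_n|$ in the hypothesis forces the bad index to lie beyond any prescribed $N$, which makes the induction possible. A standard disjointification clean-up in the spirit of \cite[Theorem~4.37]{Alip}, discarding the residual overlaps $w_j\wedge w_k\le (|x_{n_k}|-(k+1)|x_{n_j}|)^+\wedge|x_{n_j}|$, then replaces $(w_k)$ by a pairwise disjoint sequence $(\tilde w_k)\subset E^+$ still satisfying $\tilde w_k\le |x_{n_k}|\le x''$ and $q_T(\tilde w_k)>\varepsilon_0/2$.

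By the definition of $q_T$, for each $k$ I can choose $z_k\in E$ with $|z_k|\le \tilde w_k$ and $\|Tz_k\|>\varepsilon_0/2$. Disjointness of the $\tilde w_k$ forces the positive and negative parts $z_k^{\pm}$ to form $b$-order bounded positive disjoint sequences in $E$. I would then close the argument by invoking the standard disjoint-sequence characterization of $b$-weakly compact operators (see \cite{Alpay1,Aqz1}): such an operator sends every $b$-order bounded positive disjoint sequence in $E$ to a norm-null sequence in $X$. This yields $\|Tz_k^{\pm}\|\to 0$, hence $\|Tz_k\|\to 0$, contradicting the lower bound $\|Tz_k\|>\varepsilon_0/2$.

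The main obstacle is the disjointification step: extracting the genuinely pairwise disjoint sequence $(\tilde w_k)$ from $(w_k)$ while retaining a definite fraction of the $q_T$-mass and the domination $\tilde w_k\le |x_{n_k}|$. The factor $k+1$ in the definition of $u_k$ is what forces the residual overlaps $(|x_{n_k}|-(k+1)|x_{n_j}|)^+\wedge|x_{n_j}|$ to be order-small, but verifying that this smallness transfers to the lattice seminorm $q_T$ uniformly in $k$ and $j$ is where the bulk of the technical work lies; everything else in the argument is a formal manipulation of the defining sup in $q_T$ together with the cited characterization.
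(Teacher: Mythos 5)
The paper itself offers no proof of this lemma; it is imported from \cite{Aqz1} (note the label), so there is no internal argument to compare against and I judge your proposal on its own. Your architecture is the standard and correct one: contradiction, inductive selection of a subsequence witnessing the failure, disjointification, and then the disjoint-sequence characterization of $b$-weakly compact operators (\cite[Proposition 2.8]{Alpay1}, i.e.\ $(1)\Leftrightarrow(4)$ of Theorem \ref{b-weaklycompact}). The endgame --- choosing $|z_k|\le \tilde w_k$ with $\|Tz_k\|$ bounded below, splitting into the disjoint $b$-order bounded positive sequences $(z_k^{\pm})$, and deriving a contradiction --- is fine, and there is no circularity in invoking that characterization.

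The gap is exactly where you place it, and your specific construction cannot close it. With $u_k=(k+1)\sum_{j<k}|x_{n_j}|$ one has, for $j<k$, $w_j\wedge w_k\le |x_{n_j}|\wedge\bigl(|x_{n_k}|-(k+1)|x_{n_j}|\bigr)^+\le\frac{1}{k+1}|x_{n_k}|$ (the scalar inequality $u\wedge(w-nu)^+\le\frac{1}{n}w$ transfers to any Riesz space). This is too weak in both directions: to separate $w_k$ from the \emph{earlier} terms you must remove something dominating each $w_j\wedge w_k$, $j<k$, and the only available bound on their sum is $\frac{k-1}{k+1}|x_{n_k}|$, comparable to $w_k$ itself, so no definite fraction of $q_T(w_k)$ is guaranteed to survive; to separate it from the \emph{later} terms you must remove something dominating $\sum_{l>k}(w_k\wedge w_l)\le\sum_{l>k}\frac{1}{l+1}|x_{n_l}|$, which does not converge. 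Nor can you fall back on \cite[Lemma 4.35]{Alip}: its exact disjointification needs a common upper bound of the $|x_{n}|$, which here exists only in $E''$, so the disjoint sequence it produces need not lie in $E$, where $q_T$ is defined --- this is precisely the extra difficulty of the $b$-order bounded case over the order bounded one, and your sketch does not engage with it. The construction is salvageable with geometric coefficients: taking $u_k=4^k\sum_{j<k}|x_{n_j}|$ gives $w_j\wedge w_k\le 4^{-k}|x_{n_k}|$ for $j<k$, so $v_k:=(k-1)4^{-k}|x_{n_k}|+\sum_{l>k}4^{-l}|x_{n_l}|$ is a well-defined element of $E^+$ (the series converges in norm because $b$-order bounded sets are norm bounded) with $\|v_k\|\to 0$ and $v_k\ge w_k\wedge w_l$ for every $l\ne k$; then $\tilde w_k:=(w_k-v_k)^+$ is genuinely pairwise disjoint, satisfies $0\le\tilde w_k\le|x_{n_k}|$, and $q_T(\tilde w_k)\ge q_T(w_k)-\|T\|\,\|v_k\|>\varepsilon_0/2$ eventually. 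With that replacement the rest of your argument goes through.
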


In the following, we give our characterizations of $b$-weakly compact
operators using the $uaw$-convergence sequences.

\begin{theorem}
\label{b-weaklycompact}Let $T$ be an operator from a Banach lattice $E$ into
a Banach space $X$.Then the following assertions are equivalent:

\begin{enumerate}
\item $T$ is $b$-weakly compact;

\item For every $b$-order bounded sequence $(x_{n})\subset E^{+}$, $x_{n}%
\overset{uaw}{\longrightarrow }0$ implies $q_{T}(x_{n})\rightarrow 0$;

\item For every $b$-order bounded sequence $(x_{n})\subset E^{+}$, $x_{n}%
\overset{uaw}{\longrightarrow }0$ implies $\left\Vert T(x_{n})\right\Vert
\rightarrow 0$;

\item For every $b$-order bounded disjoint sequence $(x_{n})\subset E^{+}$, $%
\left\Vert T(x_{n})\right\Vert \rightarrow 0$.
\end{enumerate}
\end{theorem}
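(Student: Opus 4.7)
I would prove the equivalences via the cycle $(1)\Rightarrow(2)\Rightarrow(3)\Rightarrow(4)\Rightarrow(1)$. Three of the four pieces are short. For $(2)\Rightarrow(3)$ it suffices to note that $\|T(x_n)\|\leq q_T(x_n)$ by taking $y=x_n$ in the supremum defining $q_T$. For $(3)\Rightarrow(4)$, a $b$-order bounded set is automatically norm bounded (order intervals in $E^{\prime\prime}$ are norm bounded), so any $b$-order bounded disjoint positive sequence is a norm bounded disjoint sequence, hence $uaw$-null by Lemma~2 of \cite{Zabeti18}; applying (3) gives $\|T(x_n)\|\to 0$. The implication $(4)\Rightarrow(1)$ is the familiar characterization of $b$-weakly compact operators in terms of disjoint $b$-order bounded positive sequences that I would simply quote from \cite{Alpay1, Alpay3, Aqz1}.

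The substantive work is $(1)\Rightarrow(2)$. Given $(x_n)\subset E^+$ that is $b$-order bounded with $x_n\overset{uaw}{\longrightarrow}0$, fix $\varepsilon>0$ and apply Lemma~\ref{Aqz1} to produce $u\in E^+$ with $q_T\bigl((x_n-u)^+\bigr)\leq \varepsilon$ for all $n$. Decomposing $x_n=x_n\wedge u+(x_n-u)^+$ and using that $q_T$ is a lattice seminorm applied to a positive sum yields
\begin{equation*}
q_T(x_n)\;\leq\;q_T(x_n\wedge u)+q_T\bigl((x_n-u)^+\bigr)\;\leq\;q_T(x_n\wedge u)+\varepsilon,
\end{equation*}
so the task reduces to showing $q_T(x_n\wedge u)\to 0$. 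For each $n$ choose $y_n\in E$ with $|y_n|\leq x_n\wedge u$ and $\|T(y_n)\|\geq q_T(x_n\wedge u)-1/n$, so that $(y_n)$ sits in the order interval $[-u,u]$, i.e.\ in the principal ideal $B_u$ generated by $u$. Under the $M$-norm for which $[-u,u]$ is the closed unit ball, $B_u$ is an AM-space with unit, lattice-isometric to some $C(K)$ by Kakutani. Since $T$ is $b$-weakly compact, $T([-u,u])$ is relatively weakly compact in $X$; thus $T|_{B_u}\colon C(K)\to X$ is a weakly compact operator, and by the Dunford--Pettis property of $C(K)$ it is Dunford--Pettis.

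It then remains to show $y_n\overset{w}{\longrightarrow}0$ in $B_u$. From $|y_n|\leq x_n\wedge u$ and $x_n\overset{uaw}{\longrightarrow}0$, the squeeze $0\leq \varphi(|y_n|)\leq \varphi(x_n\wedge u)\to 0$ for every $\varphi\in(E^{\prime})^+$ gives $y_n\overset{w}{\longrightarrow}0$ in $E$. Any positive $\psi\in B_u^{\prime}$ extends by Hahn--Banach to a positive linear functional on $E$, which is automatically norm-continuous on the Banach lattice $E$; hence $\psi(y_n)\to 0$. Splitting an arbitrary $\psi\in B_u^{\prime}$ into positive and negative parts yields $y_n\overset{w}{\longrightarrow}0$ in $B_u$, so the Dunford--Pettis property forces $\|T(y_n)\|\to 0$. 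Consequently $q_T(x_n\wedge u)\to 0$ and $\limsup q_T(x_n)\leq\varepsilon$, finishing the proof by letting $\varepsilon\downarrow 0$.

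The main obstacle I foresee is precisely this last reduction: a $b$-weakly compact operator need not be AM-compact, so one cannot upgrade weak convergence of the order-bounded head $(x_n\wedge u)$ in $E$ into norm convergence through $T$ by any direct argument inside $E$. Passing to the AM-subspace $B_u$ and invoking the Dunford--Pettis property of $C(K)$ is what makes the argument close, and transferring weak convergence from $E$ to $B_u$ requires the extension-of-positive-functionals step, which is the technical crux.
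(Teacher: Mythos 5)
Your cycle is organized the same way as the paper's proof for three of the four implications ($(2)\Rightarrow(3)$ via $\|Tx\|\leq q_T(x)$, $(3)\Rightarrow(4)$ via Lemma 2 of \cite{Zabeti18}, and the disjoint-sequence characterization $(4)\Leftrightarrow(1)$ from \cite{Alpay1}), and the first half of your $(1)\Rightarrow(2)$ --- invoking Lemma \ref{Aqz1} and reducing to $q_T(x_n\wedge u)\to 0$ --- is exactly what the paper does. But the second half of $(1)\Rightarrow(2)$ contains a genuine gap. You need $y_n\overset{w}{\longrightarrow}0$ \emph{in the AM-space} $B_u=E_u$ (with the $M$-norm whose unit ball is $[-u,u]$) in order to apply the Dunford--Pettis property of $C(K)$, and you try to get this by extending positive functionals of $B_u'$ to positive functionals on $E$. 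That extension claim is false: $E_u$ is an ideal, not a majorizing sublattice, and Kantorovich-type extension does not apply. Concretely, for $E=L^1[0,1]$ and $u=\mathbf{1}$ one has $E_u=L^\infty[0,1]$, and a positive functional on $L^\infty$ given by a purely finitely additive measure cannot extend to a positive (hence automatically continuous, hence countably additive) functional on $L^1$. Worse, the conclusion you want is itself false in general: with $y_n=\chi_{[0,1/n]}$ one has $0\leq y_n\leq\mathbf{1}$ and $y_n\to 0$ weakly (even in norm) in $L^1$, yet $y_n\not\to 0$ weakly in $L^\infty$, since any weak$^*$ cluster point $\varphi$ of the normalized averages $f\mapsto n\int_0^{1/n}f$ satisfies $\varphi(y_m)=1$ for every $m$. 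So weak nullity of an order bounded sequence does not pass from $E$ to $E_u$, and the Dunford--Pettis argument does not close as written.

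The statement you are trying to prove at that point --- a $b$-weakly compact $T$ satisfies $q_T(z_n)\to 0$ for every order bounded, positive, weakly null sequence $(z_n)$ --- is true, but it requires a different argument (typically a disjointification of $(z_n)$ combined with the disjoint-sequence characterization of $b$-weak compactness, not a naive restriction to $E_u$). The paper sidesteps the difficulty entirely by quoting this fact as Theorem 2.2 of \cite{Aqz1}, applied to $z_n=x_n\wedge u$, which is order bounded and weakly null precisely because $x_n\overset{uaw}{\longrightarrow}0$. Either cite that result as the paper does, or supply a correct proof of it; the $C(K)$/Dunford--Pettis route as you have set it up does not work.
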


\begin{proof}
$\left( 1\right) \Leftrightarrow \left( 4\right) $ Follows from Proposition
2.8 of \cite{Alpay1}.

$\left( 1\right) \Rightarrow \left( 2\right) $ Assume that $T$ is $b$-weakly
compact and let $(x_{n})\subset E^{+}$ be a $b$-order bounded sequence such
that $x_{n}\overset{uaw}{\longrightarrow }0$.\ Pick $\varepsilon >0$. So, by
Lemma \ref{Aqz1}, there exists some $u\in E^{+}$ such that
\begin{equation*}
q_{T}\left( x_{n}-x_{n}\wedge u\right) =q_{T}\left( \left( x_{n}-u\right)
^{+}\right) \leq \varepsilon
\end{equation*}%
holds for all $n$. On the other hand, since $\left( x_{n}\wedge u\right)
\subset E^{+}$ is order bounded and weakly null, it follows from\ Theorem
2.2 of \cite{Aqz1} that $q_{T}\left( x_{n}\wedge u\right) \rightarrow 0$.
Therefore,
\begin{equation*}
q_{T}\left( x_{n}\right) \leq \varepsilon +q_{T}\left( x_{n}\wedge u\right)
\leq 2\varepsilon
\end{equation*}%
for all sufficiently large $n$. So $q_{T}\left( x_{n}\right) \rightarrow 0$.

$\left( 2\right) \Rightarrow \left( 3\right) $ Follows immediately from the
inequalities $\left\Vert T\left( x\right) \right\Vert \leq q_{T}(x)$ holds
for all $x\in E$.

$\left( 3\right) \Rightarrow \left( 4\right) $ Follows immediately from the
fact that for every ($b$-order bounded) disjoint sequence $(x_{n})\subset
E^{+}$, we have $x_{n}\overset{uaw}{\longrightarrow }0$ (Lemma 2 of \cite%
{Zabeti18}).
\end{proof}

It is proved in \cite[Proposition 2.10]{Alpay1} that $E$ is a KB-space if
and only if the identity operator $\mathrm{Id}_{E}:E\rightarrow E$ is a $b$%
-weakly compact operator.

\begin{cor}
For a Banach lattice $E$, the following assertions are equivalent:

\begin{enumerate}
\item $E$ is a KB-space;

\item The identity operator $\mathrm{Id}_{E}:E\rightarrow E$ is $b$-weakly
compact;

\item For every $b$-order bounded sequence $(x_{n})\subset E^{+}$, $x_{n}%
\overset{uaw}{\longrightarrow }0$ implies $\left\Vert x_{n}\right\Vert
\rightarrow 0$.
\end{enumerate}
\end{cor}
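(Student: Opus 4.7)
The plan is to recognize this corollary as an essentially immediate consequence of Theorem \ref{b-weaklycompact} specialized to the identity operator, combined with the already-cited characterization of KB-spaces via $b$-weak compactness of $\mathrm{Id}_{E}$.

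First, for the equivalence (1) $\Leftrightarrow$ (2), I would simply invoke Proposition 2.10 of \cite{Alpay1}, which the excerpt itself recalls: $E$ is a KB-space if and only if $\mathrm{Id}_{E}:E\to E$ is $b$-weakly compact. Nothing else is needed here.

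For (2) $\Leftrightarrow$ (3), the strategy is to apply Theorem \ref{b-weaklycompact} with $T=\mathrm{Id}_{E}$ and $X=E$. Under this choice, condition (3) of Theorem \ref{b-weaklycompact} reads: for every $b$-order bounded sequence $(x_{n})\subset E^{+}$, $x_{n}\overset{uaw}{\longrightarrow}0$ implies $\|\mathrm{Id}_{E}(x_{n})\|=\|x_{n}\|\to 0$. This is exactly condition (3) of the present corollary. Since Theorem \ref{b-weaklycompact} gives the equivalence of its condition (1) ($T$ is $b$-weakly compact) with its condition (3), we obtain the desired equivalence between the $b$-weak compactness of $\mathrm{Id}_{E}$ and condition (3) of the corollary.

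There is no real obstacle to overcome here; the entire content of the corollary is packaged in the previously established results. The only small point to note is that one should explicitly point out that $T=\mathrm{Id}_{E}$ makes the seminorm $q_{T}$ on $E$ coincide (up to an irrelevant reformulation) with the lattice norm, so that condition (3) of Theorem \ref{b-weaklycompact} reduces cleanly to condition (3) of the corollary without any further work. Chaining the two equivalences (1) $\Leftrightarrow$ (2) and (2) $\Leftrightarrow$ (3) then closes the proof.
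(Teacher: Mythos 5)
Your proposal is correct and matches the paper's intent exactly: the corollary is stated without proof precisely because it follows by combining Proposition 2.10 of \cite{Alpay1} (for (1) $\Leftrightarrow$ (2)) with Theorem \ref{b-weaklycompact} applied to $T=\mathrm{Id}_{E}$ (for (2) $\Leftrightarrow$ (3)). Nothing further is needed.
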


\end{document}